\title{\bf Sharp pointwise estimates for solutions of weakly coupled second order parabolic 
system in a layer}
\author{\sc{Gershon Kresin$^a\!\!$}
\thanks{Corresponding author. E-mail: kresin@ariel.ac.il}$\;\;$
 and \sc{Vladimir Maz'ya$^b$}
\thanks{E-mail: vladimir.mazya@liu.se}$\;\;$ 
\\ \\
{\it{$^a$Department of Mathematics, Ariel University, Ariel 40700, Israel}}\\
{\it{$^b$Department of Mathematical Sciences, University of Liverpool,
M$\&$O Building, Liverpool,}}\\ 
{\it{L69 3BX, UK; Department of Mathematics, Link\"oping University,SE-58183 Link\"oping, }}\\
{\it{{\hskip -19mm}Sweden; }}
{\it{RUDN University, 6 Miklukho-Maklay St., Moscow, 117198, Russia}}\\
}
{ \date\ }
\numberwithin{equation}{section}
\newtheorem{lemma}{Lemma}
\newtheorem{theorem}{Theorem}
\newtheorem{proposition}[theorem]{Proposition}
\newenvironment{remark}{{\bf Remark}}
\newcommand{\bs}{\boldsymbol}
\newcommand{\bv}{\bs |\mskip-5mu \bs |\mskip-5mu \bs |}  
\newcommand{\nl}{\lVert}
\newcommand{\nr}{\rVert}
\begin{document}
\maketitle
\centerline{\sl\Large In memory of great mathematician S.L. Sobolev}
\large
\vspace{10mm}

%%%%%%%%%%%%%%%%%%%%%%%%%%%%%%%%%%%%%%%%%%%%%%%%%%%%%%%%%%%%%%%%%%
{\bf Abstract.} We deal with $m$-component vector-valued solutions to the Cauchy problem for 
linear both homogeneous and nonhomogeneous weakly coupled second order parabolic system in the 
layer ${\mathbb R}^{n+1}_T={\mathbb R}^n\times (0, T)$. We assume that coefficients of the 
system are real and depending only on $t$, $n\geq 1$ and $T<\infty$. 
The homogeneous system is considered with initial data in $[L^p({\mathbb R}^n)]^m$, $1\leq p \leq \infty $. 
For the nonhomogeneous system we suppose that the initial function is equal to zero and the right-hand side belongs to $[L^p({\mathbb R}^{n+1}_T)]^m\cap [C^\alpha \big (\overline{{\mathbb R}^{n+1}_T} \big )]^m $, $\alpha \in (0, 1)$. Explicit formulas for the sharp coefficients in pointwise estimates for solutions of these problems and their directional derivative are obtained.
%%%%%%%%%%%%%%%%%%%%%%%%%%%%%%%%%%%%%%%%%%%%%%%%%%%%%%%%%%%%%%%%%%
\\
\\
{\bf Keywords:} Cauchy problem, weakly coupled parabolic system, 
sharp pointwise estimates, directional derivative of a vector field 
\\
\\
{\bf AMS Subject Classification:} Primary 35K45, 35A23; Secondary 47A30
\\
%%%%%%%%%%%%%%%%%%%%%%%%%%%%%%%%%%%%%%%%%%%%%%%%%%%%
\section{Introduction}\label{S_1}
%%%%%%%%%%%%%%%%%%%%%%%%%%%%%%%%%%%%%%%%%%%%%%%%%%%%

Parabolic equations and systems are classical subjects of mathematical physics (e.g. \cite{EID}, \cite{FR}, \cite{SLS}, \cite{TS}). The present paper is a continuation of our recent work  \cite{KM4} on  sharp pointwise estimates for the gradient of solutions to the Cauchy problem for the single parabolic equation of the second order with constant coefficients. We say that the estimate  is sharp if the coefficient in front of the norm in the  majorant part of the inequality cannot be diminished.
Sharp pointwise estimates for solutions to the Laplace, modified Helmholtz, Lam\'e, Stokes, and heat equations , as well as for the analytic functions  were obtained earlier in \cite {KM} - \cite {KBY}. 

In this paper we study solutions of the Cauchy problem for parabolic weakly coupled system with real coefficients of the form
\begin{equation} \label{PWCS}
\frac{\partial \bs u}{\partial t}=
\sum_{j,k=1}^n a_{jk}(t)\frac{\partial^2 \bs u}{\partial x_j \partial x_k}+\sum_{j=1}^n b_{j}(t)\frac{\partial \bs u}{\partial x_j}+C(t)\bs u+\bs f(x, t)
\end{equation}
in the layer ${\mathbb R}^{n+1}_T={\mathbb R}^n\times (0, T )$ with initial condition $\bs u|_{t=0} =\bs \varphi$, considering separately two cases, $\bs f=\bs 0$ and $\bs \varphi=\bs 0 $. Here and henceforth $T<\infty$, $n\geq 1$,   
$\bs u(x, t)=(u_1(x, t),\dots ,u_m(x, t))$ and $\bs f(x, t)=(f_1(x, t),\dots ,f_m(x, t))$. 

Throughout the article, we assume that $A(t)=((a_{jk}(t)))$ is a symmetric positive definite $(n\times n)$-matrix-valued function on $[0, T]$, which elements satisfy the H\"older condition with exponent $\alpha /2$ ($0< \alpha <1$),
$b_1(t),\dots,b_n(t)$ are continuous functions on $[0, T]$, $C(t)$
is continuous on $[0, T]$ matrix-valued function of order $m$.  
 
We obtain sharp pointwise estimates for $|\bs u|$, $|\partial \bs u/\partial \bs\ell|$ and 
$\max_{|\bs\ell|=1}|\partial \bs u/\partial \bs\ell|$,
where $\bs u$ solves the Cauchy problem for system (\ref{PWCS}) in the layer ${\mathbb R}^{n+1}_T$, $|\cdot |$ denotes the Euclidean length of a vector and $\bs\ell $ is a unit $n$-dimensional vector. 
By $\partial \bs u/\partial \bs\ell$ we mean the derivative of a vector-valued function ${\bs u}(x, t)$ in the direction $\bs\ell $: 
\begin{eqnarray} \label{Eq_3.1AAB}
\frac{\partial {{\bs u}}}{ \partial {\bs\ell}}&=&\lim_{\lambda
\rightarrow 0+}\frac{{\bs u}(x+\lambda {\bs \ell}, t)-{\bs u}(x, t) }{ \lambda}\nonumber \\
&=&({\bs \ell}, \nabla_x ){{\bs u}}=\sum _{j=1}^m \frac{\partial u_j}{\partial \bs \ell}\;{\bs e}_j\;,
\end{eqnarray}
where $\nabla_x=(\partial/\partial x_1,\dots , \partial/\partial x_n )$ and $\bs e_j$ means the unit vector of the $j$-th coordinate axis. 

The present paper consists of five sections,
including Introduction. Section \ref{S_2} is auxiliary. 

Section \ref{S_3} is devoted to explicit formulas for solutions of the Cauchy problem in ${\mathbb R}^{n+1}_T$ for system (\ref{PWCS}).

In Section \ref{S_4}, we consider a solution of the Cauchy problem 
\begin{eqnarray} \label{H}
\left\{\begin{array}{ll}
\displaystyle{\frac{\partial \bs u}{\partial t}= \sum_{j,k=1}^n a_{jk}(t)\frac{\partial^2 \bs u}{\partial x_j \partial x_k}}+\sum_{j=1}^n b_{j}(t)\frac{\partial \bs u}{\partial x_j}
+C(t) \bs u& \quad{\rm in}\; {\mathbb R}^{n+1}_T, \\
       \\
\displaystyle{\bs  u\big |_{t=0}=\bs \varphi }\;,  
\end{array}\right .
\end{eqnarray}
where $\bs\varphi\in [L^p({\mathbb R}^n)]^m$, $p \in [1, \infty]$.
The norm $\nl \cdot \nr_p$ in the space $[L^p({\mathbb R}^n)]^m$ is defined by
$$
\nl \bs\varphi\nr_{p}=\left \{\int _ {{\mathbb R}^n} | \bs\varphi(x) |^p dx \right \}^{1/p}
$$
for $1\leq  p< \infty $, and 
$$
\nl \bs\varphi \nr_{\infty} =\mbox{ess}\;\sup \{ | \bs\varphi(x) |: x \in {{\mathbb R}^n } \}\;. 
$$ 
In this section we obtain two groups of sharp estimates. 
First of them concerns modulus to solution $\bs u$ of problem (\ref{H}). Namely, we derive the inequality 
\begin{equation} \label{Eq_3.007}
|\bs u(x, t) |\leq {\mathcal H}_p(t) \nl \bs\varphi \nr_{p}
\end{equation}
with the sharp coefficient 
\begin{equation} \label{Eq_3.007A}
{\mathcal H}_p(t)=\frac{\bv e^{{\mathcal I}_{C^*}(t)}\bv }{(2\sqrt{\pi })^{n/p}\left (\det {\mathcal I}_A^{1/2}(t)\right )^{1/p}(p')^{n/(2p')}}\;,
\end{equation}
where $(x, t)$ is an arbitrary point in the layer ${\mathbb R}^{n+1}_T$. Here and henceforth ${\mathcal I}_F(t)=\int_0^t F(t)dt$, where $F$ can be a vector-valued or matrix-valued function, $p^{-1}+p'^{-1}=1$, the symbol $^*$ denotes passage to the transposed matrix and $\bv B \bv=\max_{|\bs z|=1}|B\bs z |$ means the spectral norm of the $l\times l$ real-valued matrix $B$, $\bs z \in {\mathbb R}^l$. It is known (e.g. \cite{LA}, sect. 6.3) that $\bv B \bv=\lambda ^{1/2}_B$, where $\lambda _B$ is the spectral radius of the matrix $B^*B$.

As a special case of (\ref{Eq_3.007A}) one has
\begin{equation} \label{Eq_3.007B}
{\mathcal H}_\infty (t)=\bv e^{{\mathcal I}_{C^*}(t)}\bv \;.
\end{equation}
For the single parabolic equation 
\begin{equation} \label{Eq_SHE}
\frac{\partial u}{\partial t}= \sum_{j,k=1}^n a_{jk}(t)\frac{\partial^2 u}{\partial x_j \partial x_k}+\sum_{j=1}^n b_{j}(t)\frac{\partial  u}{\partial x_j}
+c(t)u+f(x, t)
\end{equation}
with $f= 0$, formula (\ref{Eq_3.007B}) becomes
$$
{\mathcal H}_\infty (t)=\exp\left \{ \int_0^t c(t)dt \right \}\;.
$$
The second group of sharp estimates concerns modulus of
$\partial \bs u/ \partial \bs\ell $, where $\bs u$ is solution  of problem (\ref{H}). 
Namely, the formula for the sharp coefficient 
\begin{equation} \label{Eq_1.3P}
{\mathcal K}_{p,\bs\ell}(t)=\frac{
\big |{\mathcal I}_A^{-1/2}(t) \bs\ell \big | \bv \;e^{{\mathcal I}_{C^*}(t)}\bv }
{\big \{ 2^{n}\pi^{(n+p-1)/2}\det{\mathcal I}_A^{1/2}(t) \big \}^{1/p}}
\left \{ \frac{\Gamma \left (\frac{p'+1}{2} \right )} {p'^{(n+p')/2}} \right \}^{1/p'}
\end{equation}
in the inequality 
\begin{equation} \label{Eq_1.3A}
\left | \frac{\partial \bs u}{ \partial \bs\ell }(x, t) \right |\leq {\mathcal K}_{p, \bs\ell}(t) \nl \bs\varphi \nr_{p}
\end{equation}
is obtained, where $(x, t)$ is an arbitrary point in the layer ${\mathbb R}^{n+1}_T$. 

As a consequence of (\ref{Eq_1.3P}), the sharp coefficient 
\begin{equation} \label{Eq_1.3AB}
{\mathcal K}_p(t)=\max_{|\bs\ell |=1}{\mathcal K}_{p, \bs\ell}(t)
\end{equation}
in the inequality
\begin{equation} \label{Eq_1.3ABC}
\max_{|\bs\ell|=1}\left | \frac{\partial \bs u}{ \partial \bs\ell }(x, t) \right |\leq {\mathcal K}_p(t) \nl \bs\varphi \nr_{p}
\end{equation}
is found. 
In particular, 
\begin{equation} \label{Eq_1.4}
{\mathcal K}_\infty(t)=\frac{1}{\sqrt{\pi }}\bv \;{\mathcal I}_A^{-1/2}(t) \bv  
\bv \;e^{{\mathcal I}_{C^*}(t)}\bv \;.
\end{equation}
For the single parabolic equation (\ref{Eq_SHE}) with $f=0$, the previous 
formula takes the form
\begin{equation} \label{Eq_1.4SPE}
{\mathcal K}_\infty(t)=\frac{1}{\sqrt{\pi }}\bv \;{\mathcal I}_A^{-1/2}(t) \bv  \exp\left \{ \int_0^t c(t)dt \right \} \;.
\end{equation}
As a special case of (\ref{Eq_1.4SPE}) one has
$$
{\mathcal K}_\infty(t)=\frac{1}{\sqrt{\pi }}\;\frac{\exp\left \{ \int_0^t c(t)dt \right \}} 
{\left \{\int_0^t a(t)dt \right \}^{1/2}}
$$
for $A(t)=a(t)I$, where $I$ is the unit matrix of order $n$. 

In Section \ref{S_5} we consider a solution of the Cauchy problem 
\begin{eqnarray}  \label{NH}
\left\{\begin{array}{ll}
\displaystyle{\frac{\partial \bs u}{\partial t}= 
\sum_{j,k=1}^n a_{jk}(t)\frac{\partial^2 \bs u}{\partial x_j \partial x_k}}+\sum_{j=1}^n b_{j}(t)\frac{\partial \bs u}{\partial x_j}
+C(t) \bs u+\bs f(x,t)& \quad{\rm in}\; {\mathbb R}^{n+1}_T, \\
       \\
\displaystyle{ \bs u\big |_{t=0}=\bs 0}\;,  
\end{array}\right . 
\end{eqnarray}
where $\bs f\in [L^p({\mathbb R}^{n+1}_T)]^m\cap [C^\alpha\big (\overline{{\mathbb R}^{n+1}_T} \big )]^m $, $\alpha \in (0, 1)$. 
By $[C^\alpha \big (\overline{{\mathbb R}^{n+1}_T} \big )]^m$ we denote the space of $m$-component vector-valued functions $\bs f(x, t)$ which are continuous and bounded in $\overline{{\mathbb R}^{n+1}_T}$ and locally H\"older continuous with exponent $\alpha$ in $x\in {\mathbb R} ^n$, uniformly with respect to $t\in[0, T]$.
The space $[L^p\big ({\mathbb R}^{n+1}_T\big )]^m$ is endowed with the norm
$$
\nl \bs f \nr_{p, T}=\left \{ \int_0^T\int _ {{\mathbb R}^n } \big | (\bs f(x, \tau ) \big |^p dx d\tau\right \}^{1/p}
$$
for $1\leq  p< \infty $, and 
$$
\nl \bs f \nr_{\infty,T} =\mbox{ess}\;\sup \{ | \bs f(x, \tau) |:
x \in {{\mathbb R}^n },\; \tau \in (0, T) \}.
$$

In this section we also obtain two groups of sharp estimates. 
As before, first of them concerns modulus to solution $\bs u$ of problem (\ref{NH}). Namely, we prove the inequality 
\begin{equation} \label{Eq_3.007N}
|\bs u(x, t) |\leq {\mathcal N}_p(t) \nl \bs f \nr_{p, t}
\end{equation}
with the sharp coefficient 
\begin{equation} \label{Eq_3.007AN}
{\mathcal N}_p(t)=\frac{1}{(2\sqrt{\pi })^{n/p}p'^{\;n/(2p')}}
\max_{|\bs z|=1}
\left \{\int_0^t \frac{
\left | e^{{\mathcal I}_{C^*}(t, \tau)}\bs z\right |^{p'}}{\big (\det {\mathcal I}_A^{1/2}(t, \tau)\big )^{p'-1}}\;d \tau \!\right \}^{1/p'}\;,
\end{equation}
where $(x, t)$ is an arbitrary point in the layer ${\mathbb R}^{n+1}_T$. Here and henceforth
${\mathcal I}_F(t,\tau)=\int_\tau^t F(t)dt $, where $F$ can be a vector-valued or matrix-valued function. Formula (\ref{Eq_3.007AN}) is obtained under assumption that the integral in (\ref{Eq_3.007AN}) converges. For instance, integral in (\ref{Eq_3.007AN}) is convergent for $p>(n+2)/2$ in the case of single parabolic equation with constant coefficients.  

As a special case of (\ref{Eq_3.007AN}) one has
\begin{equation} \label{Eq_3.007BN}
{\mathcal N}_\infty (t)= \max_{|\bs z|=1}\;\int_0^t \left | e^{{\mathcal I}_{C^*}(t, \tau)}\bs z\right |d\tau.
\end{equation}
For the single parabolic equation (\ref{Eq_SHE}) the previous 
formula takes the form
$$
{\mathcal N}_\infty(t)=\int_0^t\exp\left \{ \int_\tau ^t c(t)dt \right \}d\tau \;.
$$

The second group of sharp estimates concerns 
$\partial \bs u/ \partial \bs\ell $.  
The explicit formula for the sharp coefficient
\begin{equation} \label{Eq_1.5N}
{\mathcal C}_{p, \bs\ell}(t)\!=
\!\frac{1}{\big \{ 2^{n}\pi^{(n\!+\!p\!-\!1)/2} \big \}^{1/p}}\left \{\!\! \frac{\Gamma \left (\frac{p'+1}{2} \right )} {p'^{\;(n+p')/2}} \!\!\right \}^{\!\!1/p'}\!\!\!\!\!\!\!\max_{|\bs z|=1}
 \left \{ \int_0^t\!\frac{\left | {\mathcal I}_A^{-1/2}(t,\tau)\bs\ell\right |^{p'}\!\!\left | e^{{\mathcal I}_{C^*}(t, \tau)}\bs z\right |^{p'}} {\left (\det {\mathcal I}_A^{1/2}(t,\tau) \right )^{p'-1}} d\tau
 \right \}^{\!1/p'}
\end{equation} 
in the inequality
\begin{equation} \label{Eq_1.5A} 
\left | \frac{\partial \bs u}{ \partial \bs\ell }(x, t) \right |\leq {\mathcal C}_{p, \bs\ell}(t)\nl \bs f \nr_{p, t}
\end{equation} 
for solutions of the Cauchy problem (\ref{NH}) is found, where $(x, t)\in {\mathbb R}^{n+1}_T$.

Formula (\ref{Eq_1.5N}) is obtained under assumption that the integral in (\ref{Eq_1.5N}) converges.
We note that this integral is convergent for $p>n+2$ in the case of single parabolic equation with constant coefficients (see \cite{KM4}).  
  
As a consequence of (\ref{Eq_1.5N}), we arrive at the formula for the sharp coefficient
\begin{equation} \label{Eq_1.5AB}
{\mathcal C}_p(t)=\max_{|\bs\ell |=1}{\mathcal C}_{p, \bs\ell}(t)
\end{equation} 
in the inequality
\begin{equation} \label{Eq_1.5ABC}
\max_{|\bs\ell|=1}\left | \frac{\partial \bs u}{ \partial \bs\ell }(x, t) \right |\leq {\mathcal C}_p(t) \nl \bs f \nr_{p, t}\;.
\end{equation} 
For instance, 
\begin{equation} \label{Eq_1.6}
{\mathcal C}_\infty(t)=\frac{1}{\sqrt{\pi}}\;\max_{|\bs\ell|=1}
\max_{|\bs z|=1}\int_0^t
\left | {\mathcal I}_A^{-1/2}(t,\tau)\bs\ell\right |\left | e^{{\mathcal I}_{C^*}(t, \tau)}\bs z\right |\; d\tau\;.
\end{equation}
For the single parabolic equation (\ref{Eq_SHE}) the previous 
formula takes the form
\begin{equation} \label{Eq_1.6A}
{\mathcal C}_\infty(t)=\frac{1}{\sqrt{\pi}}\;\max_{|\bs\ell|=1}
\int_0^t\left | {\mathcal I}_A^{-1/2}(t,\tau)\bs\ell\right | 
\exp \left \{ \int_\tau ^t c(t)dt \right \}\; d\tau\;.
\end{equation}
In the particular case $A(t)=a(t)I$, formula (\ref{Eq_1.6A}) becomes
$$
{\mathcal C}_\infty(t)=\frac{1}{\sqrt{\pi}}\;\int_0^t\;
\frac{\exp\left \{ \int_\tau^t c(t)dt \right \}} 
{\left \{\int_\tau^t a(t)dt \right \}^{1/2}}\; d\tau\;.
$$

Note that the sharp coefficients  (\ref{Eq_1.3P}) and (\ref{Eq_1.5N}) do not depend on the coefficient  vector $b(t)=(b_1(t), \dots, b_n(t))$. 

%%%%%%%%%%%%%%%%%%%%%%%%%%%%%%%%%%%%%%%%%%%%%%%%%%%%%%%%%%%%%%%%%%
\section{The norm of a certain integral operator} \label{S_2}
%%%%%%%%%%%%%%%%%%%%%%%%%%%%%%%%%%%%%%%%%%%%%%%%%%%%%%%%%%%%%%%%%%

Let $({\cal X}, {\cal A}, \mu )$ be a measure space and let 
$1\leq p\leq \infty $. We introduce the space $[L^p({\mathcal X}, {\mathcal A}, \mu )]^n$ of real vector-valued functions endowed with the norm
\begin{equation} \label{EGLP_2.01}
\nl\bs f\nr_p=\left \{ \int _{\mathcal X} |\bs f(x)|^p d\mu (x) \right \}^{1/p}
\end{equation}
for $1\leq  p< \infty $, and 
$$
\nl\bs f\nr_\infty =\mbox{ess}\;\sup \{ |\bs f(x) |: x \in {\mathcal X} \}\;.
$$
By $(\eta,  \zeta )$ we denote the inner product of the vectors $\eta $ and $\zeta $ in a Euclidean space.

The following assertion was proved in \cite{KM1} (Proposition 1.2). It contains a representation of the norm $\nl S \nr_p$ of the integral operator $S$ defined on $[L^p({\mathcal X}, {\mathcal A}, \mu )]^n$ and acting into ${\mathbb R}^m$. 

\begin{proposition} \label{P_1.2}
Let $G=((g_{ij}))$ be an $(m\times n)$-matrix-valued function with the elements
$g_{ij} \in L^{p'}({\mathcal X}, {\mathcal A}, \mu )$ 
whose values $g_{ij}$ are everywhere finite.
The norm of the linear continuous operator $S: [L^p({\mathcal X}, {\mathcal A}, \mu )]^n
\rightarrow {\mathbb R}^m$ defined by
\begin{equation} \label{EGLP_2.07}
S(\bs f)=\int _{\mathcal X} G(x) \bs f(x) d\mu (x)
\end{equation}
is equal to
\begin{equation} \label{EGLP_2.08}
\nl S\nr_p =\sup _{|\bs z|=1} \nl G^* \bs z \nr_{p'},
\end{equation}
where $G^*$ stands for the transposed matrix of $G$,  $\bs z \in {\mathbb R}^m$ and $p'$ is defined by $1/p+1/p'=1$.
\end{proposition}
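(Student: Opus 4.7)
The plan is to prove both inequalities in (\ref{EGLP_2.08}) by exploiting the duality between $\mathbb{R}^m$ and itself via the Euclidean inner product, which lets us reduce the vector-valued operator norm to a family of scalar optimization problems parametrized by a unit vector $\bs z \in \mathbb{R}^m$.

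For the upper bound $\nl S \nr_p \leq \sup_{|\bs z|=1} \nl G^*\bs z \nr_{p'}$, I would fix $\bs f \in [L^p({\mathcal X},{\mathcal A},\mu)]^n$ and observe that
$$|S(\bs f)| = \sup_{|\bs z|=1}(S(\bs f),\bs z) = \sup_{|\bs z|=1} \int_{\mathcal X} (G(x)\bs f(x),\bs z)\, d\mu(x) = \sup_{|\bs z|=1} \int_{\mathcal X} (\bs f(x), G^*(x)\bs z)\, d\mu(x).$$
The Cauchy--Schwarz inequality applied pointwise gives $|(\bs f(x),G^*(x)\bs z)| \leq |\bs f(x)|\,|G^*(x)\bs z|$, and then the scalar Hölder inequality in $L^p$--$L^{p'}$ yields $|S(\bs f)| \leq \nl \bs f \nr_p \sup_{|\bs z|=1} \nl G^*\bs z \nr_{p'}$.

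For the reverse inequality I would show that for every fixed unit vector $\bs z_0 \in \mathbb{R}^m$ and every $\varepsilon > 0$ there exists $\bs f \in [L^p]^n$ with $|S(\bs f)| \geq (\nl G^*\bs z_0 \nr_{p'} - \varepsilon)\nl \bs f \nr_p$. For $1 < p < \infty$ the natural extremizer is
$$\bs f(x) = |G^*(x)\bs z_0|^{p'-2}\, G^*(x)\bs z_0$$
on the set where $G^*(x)\bs z_0 \neq \bs 0$ (and $\bs f = \bs 0$ elsewhere), because then $|\bs f(x)|^p = |G^*(x)\bs z_0|^{p'}$ and $(S(\bs f),\bs z_0) = \int |G^*(x)\bs z_0|^{p'}\,d\mu$, which, combined with $|S(\bs f)| \geq (S(\bs f),\bs z_0)$, yields exactly the desired ratio. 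Taking the supremum over $\bs z_0$ completes the bound.

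The main obstacle I anticipate is the endpoint cases, since the formula $\bs f = |G^*\bs z_0|^{p'-2}G^*\bs z_0$ degenerates when $p \in \{1,\infty\}$. For $p = \infty$ (so $p' = 1$) I would take $\bs f(x) = G^*(x)\bs z_0/|G^*(x)\bs z_0|$ (and $\bs 0$ where the denominator vanishes), giving $\nl \bs f \nr_\infty \leq 1$ and $(S(\bs f),\bs z_0) = \nl G^*\bs z_0 \nr_1$. For $p = 1$ (so $p' = \infty$) the bound must be approached by concentration: fixing $\bs z_0$ with $|\bs z_0|=1$, pick, for any $\varepsilon > 0$, a measurable set $E$ of finite positive $\mu$-measure on which $|G^*(x)\bs z_0| \geq \nl G^*\bs z_0 \nr_\infty - \varepsilon$, and then take $\bs f(x) = \chi_E(x)\, G^*(x)\bs z_0/|G^*(x)\bs z_0|\cdot \mu(E)^{-1}$; this produces $\nl\bs f\nr_1 = 1$ and $(S(\bs f),\bs z_0) \geq \nl G^*\bs z_0 \nr_\infty - \varepsilon$. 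Letting $\varepsilon \to 0$ and then taking the supremum over $\bs z_0$ yields (\ref{EGLP_2.08}) in all cases.
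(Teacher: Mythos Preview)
Your proof is correct. The upper bound is identical to the paper's. For the lower bound, however, the paper uses a more unified argument: it fixes $\bs z\in\mathbb S^{m-1}$ and writes $\bs h_{\bs z}(x)=\bs g_{\bs z}(x)h(x)$, where $\bs g_{\bs z}(x)=G^*(x)\bs z/|G^*(x)\bs z|$ is a unit-vector-valued direction field and $h$ is an arbitrary scalar function with $\nl h\nr_p\le 1$. This yields $(S(\bs h_{\bs z}),\bs z)=\int_{\mathcal X}|G^*(x)\bs z|\,h(x)\,d\mu(x)$, and taking the supremum over $h$ invokes the \emph{scalar} $L^p$--$L^{p'}$ duality to obtain $\nl G^*\bs z\nr_{p'}$ for all $p\in[1,\infty]$ at once. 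Your approach of building explicit extremizers and handling $p=1$ and $p=\infty$ separately is exactly what the paper records afterwards, in Remark~1, as an alternative proof valid for $1<p\le\infty$; you have simply supplied the missing $p=1$ endpoint by a standard concentration argument. The advantage of the paper's main route is that it avoids case splitting entirely by pushing all the work into the already-known scalar duality; the advantage of yours is that it names the extremizing $\bs f$ explicitly, which can be useful when one later wants to identify when equality is actually attained.
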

\begin{proof} {\it 1. Upper estimate for $\nl S \nr_p$}. For any vector $\bs z \in {\mathbb R}^m$,
\begin{equation} \label{EGLP_2.10}
(S(\bs f), \bs z)=\int _{\mathcal X} (G(x) \bs f(x), \bs z) d\mu (x) = \int _{\mathcal X} (\bs f(x), G^*(x)\bs z ) d\mu (x).
\end{equation}
Hence by H\"older's inequality
$$
|(S(\bs f), \bs z)| \leq \int _{\mathcal X} |(\bs f(x), G^*(x)\bs z )| d\mu (x)
\leq \int _{\mathcal X} |G^*(x)\bs z| | \bs f(x)| d\mu (x)\leq \nl G^* \bs z \nr_{p'} \nl\bs f \nr_p.
$$
Therefore, taking into account that $|S(\bs f)|=\sup \{|(S(\bs f), \bs z)| : 
|\bs z|=1 \}$ we arrive at the estimate
\begin{equation} \label{EGLP_2.11}
\nl S\nr_p \leq \sup _{|\bs z|=1} \nl G^* \bs z\nr_{p'}.
\end{equation}

{\it 2. Lower estimate for $\nl S \nr_p$}. Let us fix 
$\bs z \in  {\mathbb S}^{m-1}=\{ \bs z \in {\mathbb R}^m : |\bs z|=1 \}$.
We introduce the vector-valued function with $n$ components
\begin{equation} \label{EGLP_2.12}
\bs h_{\bs z}(x)= \bs g_{\bs z}(x) h(x),
\end{equation}
where $h \in L_p({\mathcal X}, {\mathcal A}, \mu ),\; \nl h \nr_p \leq 1$, and
\begin{eqnarray*} 
\bs g_{\bs z}(x) =\left \{
\begin{array}{lll}
   G^*(x) \bs z |G^*(x) \bs z|^{-1} & \quad\hbox {for}\quad |G^*(x) \bs z|\not= 0 ,\\
     & \\
  {\bs 0} & \quad\hbox {for}\quad |G^*(x) \bs z|= 0 .
\end{array}\right .
\end{eqnarray*}
Note that $\bs h_{\bs z} \in [L^p({\cal X}, {\cal A}, \mu )]^n$ and  $\nl\bs h_{\bs z}\nr_p \leq 1$.
Setting (\ref{EGLP_2.12}) as $\bs f$ in (\ref{EGLP_2.10}) we find
\[
(S(\bs h_{\bs z}), \bs z)=(S(\bs g_{\bs z}h), \bs z)=\int _{\cal X} (\bs g_{\bs z}(x), G^*(x)\bs z)h(x) d\mu (x)=
\int _{\cal X} |G^*(x)\bs z|h(x) d\mu (x).
\]
Hence
\begin{eqnarray*}
\nl S\nr_p&=&\sup _{\nl\bs f \nr_p\leq 1}|S(\bs f)|\geq 
\sup _{\nl h\nr_p\leq 1}|S(\bs g_{\bs z}h)|\geq
\sup _{\nl h \nr_p\leq 1}|(S(\bs g_{\bs z}h), \bs z)|\\
& &\\
&=&\sup _{\nl h\nr_p\leq 1}\left | \int _{\mathcal X} |G^*(x)\bs z|h(x) d\mu (x) \right |
=\nl G^* \bs z \nr_{p'}.
\end{eqnarray*}
By the arbitrariness  of $\bs z \in {\mathbb S}^{m-1}$,
\begin{equation} \label{EGLP_2.13}
\nl S \nr_p \geq  \sup _{|\bs z|=1} \nl G^* \bs z \nr_{p'},
\end{equation}
which together with (\ref{EGLP_2.11}) leads to (\ref{EGLP_2.08}).
\end{proof}

\begin{remark} {\bf 1}. \label{r_alter} Let $1<p\leq \infty$. Estimate
(\ref{EGLP_2.13}) can be derived with the help of the function
\begin{eqnarray} \label{alter}
\bs h_{\bs z}(x) =\left \{
\begin{array}{lll}\displaystyle{
   \frac{G^*(x) \bs z |G^*(x) \bs z|^{p'-2}}{\nl G^* z \nr_{p'}^{p'/p}}} 
   & \quad\hbox {for}\quad |G^*(x) \bs z|\not= 0 ,\\
     & \\
  {\bs 0} & \quad\hbox {for}\quad |G^*(x) \bs z|= 0 ,
\end{array}\right .
\end{eqnarray}
where $\bs z \in {\mathbb S}^{m-1}$. Indeed, since $p'/(p'-1)=p$, it follows that
\[
\nl\bs h_z\nr_p=\nl G^*z\nr_{p'}^{-p'/p}\left \{  \int _{\cal X} |G^*(x)\bs z|^{(p'-1)p} 
d\mu (x)\right \}^{1/p}=\nl G^*z \nr_{p'}^{-p'/p}\nl G^*z\nr_{p'}^{p'/p}=1.
\]
Using (\ref{EGLP_2.10}) and (\ref{alter}), we obtain
\begin{eqnarray*}
\nl S\nr_p&\!\!\!=\!\!\!&\sup _{\nl \bs f \nr_p\leq 1}|S(\bs f)|\geq (S(\bs h_{\bs z}), \bs z)
=\nl G^*z \nl_{p'}^{-p'/p}\int _{\mathcal X} |G^*(x)\bs z|^{p'} d\mu (x) \\
& &\\
&\!\!\!=\!\!\!& \nl G^*z \nr_{p'}^{-p'/p}\nl G^* \bs z\nr_{p'}^{p'}=\nl G^* \bs z\nr_{p'},
\end{eqnarray*}
which implies (\ref{EGLP_2.13}), because $\bs z \in {\mathbb S}^{m-1}$ is arbitrary.  
\end{remark}

%%%%%%%%%%%%%%%%%%%%%%%%%%%%%%%%%%%%%%%%%%%%%%%%%%%%%%%%%%%%%%%%%%
\section{Formulas for solutions of weakly coupled parabolic systems} \label{S_3}
%%%%%%%%%%%%%%%%%%%%%%%%%%%%%%%%%%%%%%%%%%%%%%%%%%%%%%%%%%%%%%%%%%
By $(x,  y )$ we mean the inner product of the vectors $x $ and $y $ in ${\mathbb R}^n$.
The Schwartz class of $m$-component vector-valued rapidly decreasing $C^{\infty}$-functions on ${\mathbb R}^n$ will be denoted by $[{\mathcal S}(\mathbb R^n)]^m$.

Since the matrix $A(t)$ is positive definite for any $t\in [0, T]$, the matrix ${\mathcal I}_A(t)=\int_0^t A(t)dt$ is positive definite for any $t\in (0, T]$. So, there exist positive definite matrices ${\mathcal I}_A^{1/2}(t)$ and ${\mathcal I}_A^{-1/2}(t)$ of order $n$ such 
that $\big ( {\mathcal I}_A^{1/2}(t) \big )^2={\mathcal I}_A(t)$ and $\big ( {\mathcal I}_A^{-1/2}(t) \big )^2={\mathcal I}_A^{-1}(t)$ for any $t\in (0, T]$(e.g. \cite{LA}, sect. 2.14).

We start with the Cauchy problem (\ref{H}) for the homogeneous equation. 
The assertion below can be proved analogously to the corresponding statement for the heat equation (e.g.\cite{Hu}, Th. 5.4; \cite{OO}, Sect. 4.8; \cite{MS},  Sect. 7.4). We restrict 
ourselves to a formal argument.
 
\begin{lemma} \label{L_01} Let $\bs\varphi \in [{\mathcal S}(\mathbb R^n)]^m$. A solution $\bs u$ of problem $(\ref{H})$ is given by
\begin{equation} \label{Eq_3.02}
\bs u(x, t)=\int_{{\mathbb R}^n}G(x-y, t)\bs\varphi(y)dy\;,
\end{equation}
where
\begin{equation} \label{Eq_3.03}
G(x, t)= \frac{e^{{\mathcal I}_C(t)}}{(2\sqrt{\pi })^n\det {\mathcal I}_A^{1/2}(t)} e^ {-\left |{\mathcal I}_A^{-1/2}(t)( x+{\mathcal I}_b(t))\right |^2\big /4}\;.
\end{equation}
\end{lemma}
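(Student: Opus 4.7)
The plan is to apply the Fourier transform in $x$, reducing (\ref{H}) to a matrix-valued linear ODE in $t$, solve it explicitly, and invert. First I would set $\hat{\bs u}(\xi,t)=\int_{\mathbb R^n}\bs u(x,t)e^{-i(\xi,x)}dx$, which is well defined for $\bs\varphi\in[\mathcal S(\mathbb R^n)]^m$. Using $\widehat{\partial_{x_jx_k}^2\bs u}=-\xi_j\xi_k\hat{\bs u}$ and $\widehat{\partial_{x_j}\bs u}=i\xi_j\hat{\bs u}$, problem (\ref{H}) becomes, for each fixed $\xi$, the first-order linear ODE
\begin{equation*}
\partial_t\hat{\bs u}=\bigl[-(A(t)\xi,\xi)+i(b(t),\xi)\bigr]\hat{\bs u}+C(t)\hat{\bs u},\qquad \hat{\bs u}(\xi,0)=\hat{\bs\varphi}(\xi).
\end{equation*}

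Next I would exploit that the bracketed expression is scalar and therefore commutes with $C(t)$ and with its own time-integral. This lets the solution split cleanly into
\begin{equation*}
\hat{\bs u}(\xi,t)=\exp\bigl\{-(\mathcal I_A(t)\xi,\xi)+i(\mathcal I_b(t),\xi)\bigr\}\,e^{\mathcal I_C(t)}\hat{\bs\varphi}(\xi),
\end{equation*}
where $e^{\mathcal I_C(t)}$ denotes the fundamental matrix of $\dot M=C(t)M$, $M(0)=I$ (consistent with the paper's notation, which reduces to the ordinary matrix exponential when the family $\{C(t)\}$ commutes).

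To invert the Fourier transform, I would use the substitution $\xi=\mathcal I_A^{-1/2}(t)\eta$, legitimate by the positive-definiteness of $\mathcal I_A(t)$ recorded at the start of this section, which reduces the Gaussian factor in $\xi$ to the isotropic $e^{-|\eta|^2}$; the linear phase $i(\mathcal I_b(t),\xi)$ is absorbed by completing the square. A direct computation then gives
\begin{equation*}
\mathcal F^{-1}_{\xi\to x}\Bigl[e^{-(\mathcal I_A(t)\xi,\xi)+i(\mathcal I_b(t),\xi)}\Bigr](x)=\frac{1}{(2\sqrt\pi)^n\det\mathcal I_A^{1/2}(t)}\,e^{-|\mathcal I_A^{-1/2}(t)(x+\mathcal I_b(t))|^2/4}.
\end{equation*}
The convolution theorem, together with the factor $e^{\mathcal I_C(t)}$ pulled out of the inverse transform, then yields formula (\ref{Eq_3.02}) with kernel (\ref{Eq_3.03}). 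The initial condition would follow from the standard approximate-identity property: $\int_{\mathbb R^n} G(x,t)dx = e^{\mathcal I_C(t)}\to I$ and the Gaussian mass concentrates at $x=-\mathcal I_b(t)\to 0$ as $t\to 0^+$. The main conceptual point is the commutation that enables the scalar/matrix splitting; once it is recognized, the calculation is routine, which is why the authors announce that they restrict themselves to a formal argument.
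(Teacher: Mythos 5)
Your proof is correct and follows essentially the same Fourier-analytic route as the paper's, with a minor reorganization: the paper first makes the substitution $\bs v = e^{-{\mathcal I}_C(t)}\bs u$ at the PDE level to strip off the zero-order coupling, then Fourier-transforms the decoupled problem (\ref{Hv}) and multiplies back by $e^{{\mathcal I}_C(t)}$ at the end; you transform (\ref{H}) directly and factor the solution of the resulting ODE in $t$. Both versions rest on the same observation, that the scalar symbol $-(A(t)\xi,\xi)+i(b(t),\xi)$ commutes with $C(t)$, so the two are interchangeable.

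One point worth retaining from your write-up: you correctly flag that $e^{{\mathcal I}_C(t)}$ should, in general, be read as the fundamental matrix of $\dot M = C(t)M$, $M(0)=I$, which coincides with the ordinary matrix exponential $\exp\int_0^t C(s)\,ds$ only when $C(t)$ commutes with ${\mathcal I}_C(t)$ for every $t$. The paper silently uses the plain matrix exponential --- this literal reading is needed downstream, for instance for the identity $\bigl(e^{{\mathcal I}_C(t)}\bigr)^{*} = e^{{\mathcal I}_{C^*}(t)}$ in (\ref{Eq_3.002A}) and for the coefficients involving the spectral norm of $e^{{\mathcal I}_{C^*}(t)}$ in Theorems \ref{T_01}--\ref{T_04} --- so the commutation is evidently an implicit standing hypothesis in the ``formal argument'' the authors announce. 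Your remark is therefore a legitimate precision gain, though it does not change the architecture of the proof, and the rest of your computation (the Gaussian inversion via $\xi = {\mathcal I}_A^{-1/2}(t)\eta$ and completion of the square, matching the paper's use of (\ref{Eq_4.06A})) is routine and accurate.
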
 
\begin{proof} Let $\bs u$ be solution of the Cauchy problem (\ref{H}). We introduce the function 
\begin{equation} \label{Eq_3.02B}
\bs v= e^{-{\mathcal I}_C(t)}\bs u.
\end{equation}
Then $\bs v$ is solution of the problem
\begin{eqnarray} \label{Hv}
\left\{\begin{array}{ll}
\displaystyle{\frac{\partial \bs v}{\partial t}= \sum_{j,k=1}^n a_{jk}(t)\frac{\partial^2 \bs v}{\partial x_j \partial x_k}}+\sum_{j=1}^n b_{j}(t)\frac{\partial \bs v}{\partial x_j}
& \quad{\rm in}\; {\mathbb R}^{n+1}_T, \\
       \\
\displaystyle{\bs  v\big |_{t=0}=\bs \varphi }\;,  
\end{array}\right .
\end{eqnarray}

Applying the Fourier transform
\begin{equation} \label{eq_4.02}
\hat{\bs v}(\xi, t)= \frac{1}{(2\pi)^{n/2}} \int_{\mathbb R ^n} e ^{-i( x,\xi)}\bs v(x, t)dx
\end{equation}
to the Cauchy problem (\ref{Hv}), we obtain
\begin{equation} \label{Eq_4.03}
\frac{d\hat{\bs v}}{dt}=\big \{-(A(t)\xi, \xi)+i (b(t), \xi) \big \}\hat{\bs v}\;,\;\;\;\;\;\hat{\bs v}(\xi,0)=\hat{\bs\varphi}(\xi)\;.
\end{equation}
The solution of problem (\ref{Eq_4.03}) is
\begin{equation} \label{Eq_4.04}
\hat{\bs v}(\xi, t)=\hat{\bs\varphi }(\xi)e^{\int_0^t \{-(A(t)\xi, \xi )+i (b(t), \xi) \}dt}=
\hat{\bs\varphi }(\xi)e^{ -({\mathcal I}_A(t)\xi, \xi )+i ({\mathcal I}_b(t), \xi) }.
\end{equation}

By the inverse Fourier transform
$$
\bs v(x, t)=\frac{1}{(2\pi)^{n/2}}\int_{\mathbb R ^n} e ^{i( x, \xi)}\hat{\bs v}(\xi ,t)d\xi, 
$$
we deduce from (\ref{Eq_4.04})
\begin{eqnarray} \label{Eq_4.05a}
& & \bs v(x, t)= \frac{1}{(2\pi)^{n/2}}\int_{\mathbb R ^n} e ^{i( x,\xi)}\hat{\bs\varphi}(\xi)e^{-({\mathcal I}_A(t)\xi, \xi)+i ({\mathcal I}_b(t), \xi) }d\xi \nonumber \\
& & = \frac{1}{(2\pi)^{n}}\int_{\mathbb R ^n} e ^{i( x,\xi)}e^{ -({\mathcal I}_A(t)\xi, \xi )+i ({\mathcal I}_b(t), \xi) }  \left \{ \int_{\mathbb R ^n} e ^{-i( y, \xi)}{\bs\varphi}(y)dy \right \} d\xi \nonumber\\
& & =\frac{1}{(2\pi)^{n}}\int_{\mathbb R ^n} \left\{ \int_{\mathbb R ^n} e^{i( x- y+ {\mathcal I}_b(t), \xi)}e^{-({\mathcal I}_A(t) \xi, \xi)} d\xi \right\} {\bs\varphi}(y)dy\;.  
\end{eqnarray}
Let us denote
\begin{equation} \label{Eq_4.06}
    G_0(x,t)=  \frac{1}{(2\pi)^n}\int_{\mathbb R ^n} e^{-({\mathcal I}_A(t)\xi, \xi)+i( x+ {\mathcal I}_b(t) , \xi)}d\xi\;.
\end{equation}

The known formula (e.g.\cite{VL}, Ch. 2, Sect. 9.7)
\begin{equation} \label{Eq_4.06A}
\int_{\mathbb R ^n} e^{-(M\xi, \xi)+i( \zeta,\xi)}d\xi=
\frac{\pi^{n/2}}{\sqrt{\det M}}e^{-(M^{-1}\zeta, \zeta )/4},
\end{equation}
where $M$ is a symmetric positive definite matrix of order $n$,
together with (\ref{Eq_4.06}) leads to
\begin{equation} \label{Eq_4.07}
   G_0(x,t)=  \frac{1}{(2\sqrt{\pi })^n\sqrt{\det {\mathcal I}_A(t)}}e^{-\left ({\mathcal I}_A^{-1}(t)( x+{\mathcal I}_b(t)) ,  x+{\mathcal I}_b(t) \right )\big /4}\;.
\end{equation}

Since $(B^{-1}\zeta, \zeta)=(B^{-1/2}B^{-1/2}\zeta, \zeta)=
(B^{-1/2}\zeta, B^{-1/2}\zeta )=|B^{-1/2}\zeta|^2$ as well as $\det B=\big ( \det B^{1/2}\big )^2$ for any symmetric positive
definite matrix $B$ and every vector  $\zeta \in {\mathbb R ^n}$, we can write (\ref{Eq_4.07}) as 
\begin{equation} \label{Eq_4.08}
G_0(x, t)= \frac{1}{(2\sqrt{\pi })^n\det {\mathcal I}_A^{1/2}(t)} e^ {-\left |{\mathcal I}_A^{-1/2}(t)( x+{\mathcal I}_b(t))\right |^2\big /4}\;.
\end{equation}
It follows from (\ref{Eq_3.02B}), (\ref{Eq_4.05a}), (\ref{Eq_4.06}) and (\ref{Eq_4.08}) that the solution of problem (\ref{H}) can be represented as (\ref{Eq_3.02}),
where $G(x,t)$ is given by (\ref{Eq_3.03}).
\end{proof}

\medskip
The next assertion can be proved in view of (\ref{Eq_3.02B}) and (\ref{Hv}) on the base of Lemma \ref{L_01}
similarly to the analogous statement for the heat equation (e.g.\cite{Hu}, Th. 5.5; \cite{MS},  Sect. 7.4). 

\begin{proposition} \label{P_1} Suppose that $1\leq p\leq \infty$
and $\bs\varphi \in [L^p({\mathbb R}^n)]^m$. Define
$\bs u: {\mathbb R}^{n+1}_T\rightarrow {\mathbb R}^m$
by $(\ref{Eq_3.02})$, where $G$ is given by $(\ref{Eq_3.03})$.
Then $\bs u(x, t)$ is solution of the system
$$
\frac{\partial \bs u}{\partial t}= \sum_{j,k=1}^n a_{jk}(t)\frac{\partial^2 \bs u}{\partial x_j \partial x_k}+
\sum_{j=1}^n b_{j}(t)\frac{\partial \bs u}{\partial x_j}+
C(t) \bs u
$$ 
in ${\mathbb R}^{n+1}_T$. If $1\leq p< \infty$, then 
$\bs u(\cdot , t)\rightarrow \bs\varphi$ in $[L^p]^m$ as $t \rightarrow 0^+$.
\end{proposition}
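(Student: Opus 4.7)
The plan is to reduce to the case $C \equiv 0$ and then verify both assertions for the convolution $\bs v(x,t) = \int_{\mathbb R^n} G_0(x-y,t)\bs\varphi(y)\,dy$, where $G_0$ is the scalar kernel (\ref{Eq_4.08}). Writing $\bs u(x,t) = e^{\mathcal I_C(t)} \bs v(x,t)$ transforms problem (\ref{Hv}) into (\ref{H}); since $e^{\mathcal I_C(t)} \to I$ as $t\to 0^+$, the $L^p$ convergence $\bs v(\cdot,t)\to\bs\varphi$ is equivalent to $\bs u(\cdot,t)\to\bs\varphi$. Thus it suffices to treat $\bs v$.

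For the PDE, fix $t_0 > 0$; for each $t \in [t_0, T]$ the kernel $G_0(\cdot,t)$ is a shifted, anisotropic Gaussian in $\mathcal S(\mathbb R^n)$, and all of its spatial and time partial derivatives are polynomial-times-Gaussian expressions that lie in every $L^{p'}(\mathbb R^n)$, with $L^{p'}$-norms uniformly bounded on $[t_0, T]$. By H\"older's inequality, differentiation under the integral sign in (\ref{Eq_3.02}) is legitimate for $\bs\varphi \in [L^p]^m$. The pointwise identity
\begin{equation*}
\frac{\partial G_0}{\partial t} = \sum_{j,k=1}^n a_{jk}(t)\frac{\partial^2 G_0}{\partial x_j \partial x_k} + \sum_{j=1}^n b_j(t)\frac{\partial G_0}{\partial x_j}
\end{equation*}
is precisely the Fourier computation (\ref{Eq_4.04})--(\ref{Eq_4.07}) carried out for the kernel itself, and it propagates through the convolution to give the parabolic system for $\bs v$, hence for $\bs u$.

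For the initial condition with $1 \le p < \infty$, I will run a standard approximate-identity argument. The change of variables $y = \tfrac12 \mathcal I_A^{-1/2}(t)(x + \mathcal I_b(t))$ combined with $\int_{\mathbb R^n} e^{-|y|^2}\,dy = \pi^{n/2}$ yields $\|G_0(\cdot,t)\|_1 = 1$ for every $t > 0$; hence Young's convolution inequality gives $\|G_0(\cdot,t) * \bs h\|_p \le \|\bs h\|_p$ for every $\bs h \in [L^p]^m$. Applying Minkowski's integral inequality,
\begin{equation*}
\|\bs v(\cdot,t) - \bs\varphi\|_p \le \int_{\mathbb R^n} G_0(z,t)\,\|\bs\varphi(\cdot - z) - \bs\varphi(\cdot)\|_p\, dz,
\end{equation*}
and splitting the integral at $|z| = \delta$ reduces matters to continuity of translation in $[L^p]^m$ and to the concentration property $\int_{|z| \ge \delta} G_0(z,t)\,dz \to 0$ as $t \to 0^+$.

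The main obstacle is the concentration property, which in the classical heat-semigroup setting follows from self-similarity of the Gaussian. Here $G_0(\cdot,t)$ is not self-similar: it is shifted by $\mathcal I_b(t)$ and stretched anisotropically by $\mathcal I_A^{1/2}(t)$. However, continuity of $A$ and $b$ on $[0,T]$ with $A$ positive definite forces both $|\mathcal I_b(t)| \to 0$ and the spectral norm $\bv \mathcal I_A^{1/2}(t) \bv \to 0$ as $t \to 0^+$; applying the same change of variables as above turns the tail integral into $\pi^{-n/2} \int_{|y| \ge c(t)} e^{-|y|^2}\,dy$ with $c(t) \to \infty$, which yields the concentration property and completes the argument.
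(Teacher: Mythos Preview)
Your proof is correct and follows exactly the route the paper indicates: it does not give a detailed proof of this proposition but simply remarks that it ``can be proved in view of (\ref{Eq_3.02B}) and (\ref{Hv}) on the base of Lemma~\ref{L_01} similarly to the analogous statement for the heat equation,'' citing standard references. Your reduction $\bs u = e^{\mathcal I_C(t)}\bs v$ is precisely (\ref{Eq_3.02B}), and the differentiation-under-the-integral plus approximate-identity argument for $\bs v$ is the standard heat-kernel proof transplanted to the anisotropic, drifted Gaussian $G_0$; one very small imprecision is that after your change of variables the tail region $\{|z|\ge\delta\}$ is only \emph{contained in} $\{|y|\ge c(t)\}$ rather than equal to it, but the resulting upper bound is all you need.
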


\begin{remark} {\bf 2.}
It is known (e.g. \cite{FR}, Ch.1, Sect. 6, 7 and 9) that
$$
\bs v(x, t)=\int_{{\mathbb R}^n}G_0(x-y, t)\bs\varphi(y)dy\;,
$$
where $G_0$ is given by (\ref{Eq_4.07}), represents a unique bounded solution of the Cauchy problem (\ref{Hv}) and $\bs v(x , t)\rightarrow \bs\varphi(x)$ as $t \rightarrow 0^+$ at any $x\in {\mathbb R}^n$ under assumption that $\bs\varphi$ belongs to $[C({\mathbb R}^n)]^m\cap [L^\infty ({\mathbb R}^n)]^m$. This fact together with (\ref{Eq_3.02B}) and the Lusin's theorem 
(see \cite{VU}, Ch. VI, Sect. 6) implies that $\bs u(\cdot , t)\rightarrow \bs\varphi $ almost everywhere in ${\mathbb R}^n$ as $t \rightarrow 0^+$, where $\bs u$ is a solution of problem (\ref{H}) with $\bs\varphi \in [L^\infty({\mathbb R}^n)]^m$.
\end{remark}

\medskip
Further, let us consider solution $\bs u$ of the Cauchy problem (\ref{NH}) for the nonhomogeneous system. We introduce the function (\ref{Eq_3.02B}). Then $\bs v$ is solution of the problem
\begin{eqnarray} \label{NvNH}
\left\{\begin{array}{ll}
\displaystyle{\frac{\partial \bs v}{\partial t}= \sum_{j,k=1}^n a_{jk}(t)\frac{\partial^2 \bs v}{\partial x_j \partial x_k}}+\sum_{j=1}^n b_{j}(t)\frac{\partial \bs v}{\partial x_j}+e^{-{\mathcal I}_C(t)}\bs f(x, t)
& \quad{\rm in}\; {\mathbb R}^{n+1}_T, \\
       \\
\displaystyle{\bs  v\big |_{t=0}=\bs 0 }\;.  
\end{array}\right .
\end{eqnarray}
In view of (\ref{Eq_3.02B}) and (\ref{NvNH}), the next statement can be proved analogously to the similar assertion for the heat equation (e.g. \cite{OO}, Sect. 4.8). 
We restrict ourselves to a formal argument.

\begin{lemma} \label{L_2} Let $\bs f(\cdot, t)\in [{\mathcal S}({\mathbb R}^n)]^m$ for any $t\in [0, T]$ and let 
the quantities $C_{\alpha, m}$ in the estimates
$$
\left (1+|x|^m \right )|\partial_x^\alpha \bs f(x, t)|\leq C_{\alpha, m}
$$
are independent of $t$ for any integer $m\geq 0$ and multiindex $\alpha$.

The solution of problem $(\ref{NH})$  is given by
\begin{equation} \label{Eq_3.2a}
\bs u(x, t)=\int_0^t\int_{{\mathbb R}^n}P(x-y, t,\tau) \bs f(y, \tau)dy d\tau\;,
\end{equation}
where $P$ is defined by 
\begin{equation} \label{Eq_4.6B}
P(x,t, \tau )=\frac{e^{{\mathcal I}_C(t,\tau)}}{(2\sqrt{\pi })^n\det {\mathcal I}_A^{1/2}(t,\tau)} e^ {-\left |{\mathcal I}_A^{-1/2}(t,\tau)( x+{\mathcal I}_b(t,\tau))\right |^2\big /4}.
\end{equation}
\end{lemma}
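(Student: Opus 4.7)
The plan is to use Duhamel's principle, reducing the nonhomogeneous problem to a family of homogeneous problems for which Lemma \ref{L_01} already supplies the solution. The substitution $\bs v = e^{-{\mathcal I}_C(t)}\bs u$ from (\ref{Eq_3.02B}) has already been carried out in the excerpt and turns (\ref{NH}) into problem (\ref{NvNH}), i.e.\ the first-order-plus-diffusion system with the same leading coefficients but with right-hand side $\bs g(x,t):=e^{-{\mathcal I}_C(t)}\bs f(x,t)$ and zero initial data. So it is enough to solve (\ref{NvNH}) and then multiply by $e^{{\mathcal I}_C(t)}$ at the end.

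For each fixed $\tau \in (0,t)$, I would let $\bs w_\tau(x,s)$, $s\ge \tau$, be the solution of the homogeneous problem
\[
\frac{\partial \bs w_\tau}{\partial s} = \sum_{j,k=1}^n a_{jk}(s)\frac{\partial^2 \bs w_\tau}{\partial x_j \partial x_k}+\sum_{j=1}^n b_j(s)\frac{\partial \bs w_\tau}{\partial x_j},\qquad \bs w_\tau\big|_{s=\tau}=\bs g(\cdot,\tau).
\]
By the same Fourier-transform computation used to prove Lemma \ref{L_01}, but starting from $s=\tau$ rather than $s=0$ (so that $\int_0^s$ is replaced by $\int_\tau^s$, which is our ${\mathcal I}_{\cdot}(s,\tau)$), the solution is
\[
\bs w_\tau(x,t)=\int_{{\mathbb R}^n}G_0(x-y,t,\tau)\bs g(y,\tau)\,dy,
\]
where
\[
G_0(x,t,\tau)=\frac{1}{(2\sqrt{\pi})^n\det{\mathcal I}_A^{1/2}(t,\tau)}\exp\!\left\{-\frac{|{\mathcal I}_A^{-1/2}(t,\tau)(x+{\mathcal I}_b(t,\tau))|^2}{4}\right\}.
\]
Duhamel's principle then asserts that
\[
\bs v(x,t)=\int_0^t \bs w_\tau(x,t)\,d\tau = \int_0^t\!\!\int_{{\mathbb R}^n} G_0(x-y,t,\tau)e^{-{\mathcal I}_C(\tau)}\bs f(y,\tau)\,dy\,d\tau.
\]
Multiplying by $e^{{\mathcal I}_C(t)}$ and using $e^{{\mathcal I}_C(t)}e^{-{\mathcal I}_C(\tau)}=e^{{\mathcal I}_C(t,\tau)}$ (valid since $C(s)$ and $\int_\tau^t C(s)\,ds$ commute because all these matrices are scalar functions of $t$ times matrices that pairwise commute only in the scalar-in-$t$ sense; more precisely, we use that $\int$ and $\exp$ of commuting matrices behave as expected, and here $e^{{\mathcal I}_C(t)}e^{-{\mathcal I}_C(\tau)}=\exp\{\int_\tau^t C(s)\,ds\}$ follows from ${\mathcal I}_C(t)-{\mathcal I}_C(\tau)={\mathcal I}_C(t,\tau)$ combined with the BCH identity since $C(s)$ commutes with $\int_0^\tau C(s')ds'$), we obtain
\[
\bs u(x,t)=e^{{\mathcal I}_C(t)}\bs v(x,t)=\int_0^t\!\!\int_{{\mathbb R}^n}P(x-y,t,\tau)\bs f(y,\tau)\,dy\,d\tau
\]
with $P$ as in (\ref{Eq_4.6B}). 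Verification that $\bs u$ satisfies the PDE and initial condition is direct: one differentiates under the integral in $t$, separating the contribution from the upper limit $\tau=t$ (which gives the source $\bs f(x,t)$ via a Dirac-mass argument, using $G_0(\cdot,t,t-0)\to \delta$) from the contribution inside the integrand (which gives the linear differential operator applied to $\bs u$, because $G_0$ satisfies the homogeneous equation in $(x,t)$).

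The main obstacles I anticipate are two bookkeeping points rather than deep issues. First, the commutativity needed for $e^{{\mathcal I}_C(t)}e^{-{\mathcal I}_C(\tau)}=e^{{\mathcal I}_C(t,\tau)}$: this holds because ${\mathcal I}_C(t)$ and ${\mathcal I}_C(\tau)$ are integrals of the same matrix-valued function over nested intervals and hence commute with each other as $\int_0^\tau C+\int_\tau^t C=\int_0^t C$ is a scalar identity at the level of matrix sums, giving the needed factorization. Second, the Dirac-limit step $G_0(\cdot,t,\tau)\to \delta$ as $\tau\to t^-$: here one uses that $\det{\mathcal I}_A^{1/2}(t,\tau)\to 0$ and that $G_0$ is a Gaussian of unit mass (since $\int_{{\mathbb R}^n}G_0(x,t,\tau)\,dx=1$ by translation invariance and the standard Gaussian normalization), and the rapid decay hypothesis on $\bs f$ justifies interchanging limits and integrals. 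Because the lemma asks only for a \emph{formal argument}, these justifications can be stated briefly rather than developed in detail.
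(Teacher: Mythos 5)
Your Duhamel-principle argument takes a genuinely different route from the paper's: the paper Fourier-transforms (\ref{NvNH}) directly, solves the resulting ODE in $t$ for $\hat{\bs v}(\xi,t)$ by variation of parameters, and then inverts; you instead superpose, over $\tau\in(0,t)$, solutions of homogeneous Cauchy problems launched at time $\tau$ with data $\bs g(\cdot,\tau)=e^{-{\mathcal I}_C(\tau)}\bs f(\cdot,\tau)$, and then invoke the time-shifted version of the kernel from Lemma~\ref{L_01}. Both routes produce the same formula; yours has the advantage of explicitly reusing the homogeneous theory rather than repeating the Fourier computation.

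One step of your argument is justified incorrectly, though, and you should fix it. You claim $e^{{\mathcal I}_C(t)}e^{-{\mathcal I}_C(\tau)}=e^{{\mathcal I}_C(t,\tau)}$ holds because ${\mathcal I}_C(t)$ and ${\mathcal I}_C(\tau)$ are integrals of the same matrix over nested intervals and because $\int_0^\tau C+\int_\tau^t C=\int_0^t C$. That additivity identity is true but says nothing about commutativity: $e^{A}e^{B}=e^{A+B}$ requires $[A,B]=0$, and with $A={\mathcal I}_C(\tau)$, $B={\mathcal I}_C(t,\tau)$ this can fail unless $C(s)$ and $C(s')$ commute for all $s,s'$. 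So the nested-interval observation does not prove what you want. The fairer framing is that the paper already relies on an equivalent commutativity: the reduction via $\bs v=e^{-{\mathcal I}_C(t)}\bs u$ uses $\frac{d}{dt}\,e^{-{\mathcal I}_C(t)}=-C(t)e^{-{\mathcal I}_C(t)}$, which likewise requires $[C(t),{\mathcal I}_C(t)]=0$. You are therefore not introducing a new hypothesis, but you should state the commutativity as an assumption implicit in the paper's reduction to (\ref{NvNH}) rather than purport to derive it from interval additivity.
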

\begin{proof} 
Applying the Fourier transform to the Cauchy problem (\ref{NvNH}), we obtain
\begin{equation} \label{Eq_4.03A}
\frac{d\hat{\bs v}}{dt}=\big \{-(A(t)\xi, \xi)+i (b(t), \xi) \big \}\hat{\bs v}+e^{-{\mathcal I}_C(t)}
\hat{\bs f}(\xi, t)\;,\;\;\;\;\;\hat{\bs v}(\xi,0)=0\;.
\end{equation}
The solution of problem (\ref{Eq_4.03A}) is
\begin{eqnarray} \label{Eq_4.4A}
\hat{\bs v}(\xi, t)&=&e^{ -({\mathcal I}_A(t)\xi, \xi)+i ({\mathcal I}_b(t), \xi)}
\int_0^t e^{-{\mathcal I}_C(\tau)}\hat{\bs f}(\xi, \tau)e^{ ({\mathcal I}_A(\tau)\xi, \xi)-i ({\mathcal I}_b(\tau), \xi)}d\tau\nonumber\\
&=&\int_0^t e^{-{\mathcal I}_C(\tau)}\hat{\bs f}(\xi, \tau)
e^{\int_\tau ^t\{ -(A(s)\xi, \xi)+i (b(s), \xi)\}ds} d\tau\;.
\end{eqnarray}
By the inverse Fourier transform in (\ref{Eq_4.4A}), we have
\begin{eqnarray*} 
\bs v(x, t)&=&\frac{1}{(2\pi)^{n/2}}\int_{\mathbb R ^n}\left \{\int_0^t e^{-{\mathcal I}_C(\tau)}\hat{\bs f}(\xi, \tau)e^{\int_\tau ^t\{ -(A(s)\xi, \xi)+i (b(s), \xi)\}ds}d\tau \right \}e^{i(x, \xi)}d\xi\\
&=&\frac{1}{(2\pi)^{n/2}}\int_0^t\left \{\int_{\mathbb R ^n} e^{i(x, \xi)} e^{-{\mathcal I}_C(\tau)}\hat{\bs f}(\xi, \tau)e^{\int_\tau ^t\{ -(A(s)\xi, \xi)+i (b(s), \xi)\}ds} d\xi \right \}d\tau\\
&=&\frac{1}{(2\pi)^n}\int_0^te^{-{\mathcal I}_C(\tau)}\left \{\int_{\mathbb R ^n} 
e^{i(x, \xi)} 
\left \{ \int_{\mathbb R ^n}e^{-i(y, \xi)}\bs f(y, \tau)dy \right \}
e^{\int_\tau ^t\{ -(A(s)\xi, \xi)+i (b(s), \xi)\}ds} d\xi \right \}d\tau
\\
&=&\int_0^t \int_{\mathbb R ^n}
\left \{\frac{1}{(2\pi)^n} \int_{\mathbb R ^n}e^{i(x-y+\int_\tau ^t b(s)ds, \xi)}
e^{ -\int_\tau ^t(A(s) \xi, \xi)ds}d\xi\right \}e^{-{\mathcal I}_C(\tau)}\bs f(y, \tau)dy d\tau.
\end{eqnarray*}
Multiplying the last equality by 
$$
e^{{\mathcal I}_C(t)},
$$
in view of (\ref{Eq_3.02B}), we arrive at
\begin{equation} \label{Eq_4.5A}
\bs u(x, t)=\int_0^t \int_{\mathbb R ^n}
\left \{\frac{e^{{\mathcal I}_C(t, \tau)}}{(2\pi)^n} \int_{\mathbb R ^n}e^{i(x-y+\int_\tau ^t b(s)ds, \xi)}
e^{ -\int_\tau ^t(A(s) \xi, \xi)ds}d\xi\right \}\bs f(y, \tau)dy d\tau.
\end{equation}
By (\ref{Eq_4.06A}), expression inside of the braces in the right-hand side of (\ref{Eq_4.5A}) is equal to
$$
\frac{e^{{\mathcal I}_C(t,\tau)}}{(2\sqrt{\pi })^n\sqrt{\det {\mathcal I}_A(t,\tau)}} e^ {-\left ({\mathcal I}_A^{-1}(t,\tau)( x-y+{\mathcal I}_b(t,\tau)), x-y+{\mathcal I}_b(t,\tau)\right )/4}.
$$
Applying the same arguments as in the proof of Lemma \ref{L_01},
we can rewrite (\ref{Eq_4.5A}) as (\ref{Eq_3.2a}), where $P(x, t, \tau)$
is given by (\ref{Eq_4.6B}).
\end{proof}

%%%%%%%%%%%%%%%%%%%%%%%%%%%%%%%%%%%%%%%%%%%%%%%%%%%%%%%%%%%%%%%%%%
\section{Sharp estimates for solutions to the homogeneous weakly coupled parabolic system} \label{S_4}
%%%%%%%%%%%%%%%%%%%%%%%%%%%%%%%%%%%%%%%%%%%%%%%%%%%%%%%%%%%%%%%%%%

In this section we obtain estimates for solution of the Cauchy problem (\ref{H}). First, we prove the sharp pointwise estimate for $|\bs u|$ with  $ \bs \varphi \in [L^p({\mathbb R}^n)]^m$, where $p \in [1, \infty]$.

\setcounter{theorem}{0}
\begin{theorem} \label{T_01} Let $(x, t)$ be an arbitrary point in ${\mathbb R}^{n+1}_T$ and $\bs u $ be solution of problem $(\ref{H})$. 
The sharp coefficient ${\mathcal H}_p(t)$ in inequality $(\ref{Eq_3.007})$ is given by $(\ref{Eq_3.007A})$.
As a special case of $(\ref{Eq_3.007A})$ with $p=\infty$ one has $(\ref{Eq_3.007B})$.
\end{theorem}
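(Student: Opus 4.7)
The plan is to combine the explicit representation of Lemma~\ref{L_01} (extended to $[L^p(\mathbb{R}^n)]^m$ data via Proposition~\ref{P_1}) with the sharp operator-norm formula of Proposition~\ref{P_1.2}. For each fixed $(x,t)\in\mathbb{R}^{n+1}_T$, the map $\bs\varphi\mapsto\bs u(x,t)$ is the bounded linear operator $S_{x,t}\colon[L^p(\mathbb{R}^n)]^m\to\mathbb{R}^m$ with matrix kernel $G(x-y,t)$. Proposition~\ref{P_1.2} then yields $\nl S_{x,t}\nr_p=\sup_{|\bs z|=1}\nl G^*(x-\cdot,t)\bs z\nr_{p'}$, which is by definition the sharp constant in $|\bs u(x,t)|\le\nl S_{x,t}\nr_p\,\nl\bs\varphi\nr_p$. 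By translation invariance in $x$ this supremum depends only on $t$, as demanded by (\ref{Eq_3.007A}).

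The key structural observation is that $G$ factors as a scalar Gaussian times a $t$-dependent matrix:
$$
G(x-y,t)=g(x-y,t)\,e^{{\mathcal I}_C(t)},\qquad
g(x-y,t)=\frac{\exp\bigl\{-|{\mathcal I}_A^{-1/2}(t)(x-y+{\mathcal I}_b(t))|^2/4\bigr\}}{(2\sqrt{\pi})^n\det{\mathcal I}_A^{1/2}(t)}.
$$
Hence $G^*(x-y,t)\bs z = g(x-y,t)\,e^{{\mathcal I}_{C^*}(t)}\bs z$, and the $y$-variable completely separates:
$$
\nl G^*(x-\cdot,t)\bs z\nr_{p'} = \bigl|e^{{\mathcal I}_{C^*}(t)}\bs z\bigr|\cdot\nl g(x-\cdot,t)\nr_{L^{p'}(\mathbb{R}^n)}.
$$

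Next I would compute the scalar Gaussian norm. The substitution $w={\mathcal I}_A^{-1/2}(t)(x-y+{\mathcal I}_b(t))$ has Jacobian determinant $\det{\mathcal I}_A^{1/2}(t)$ and reduces the integral to a product of one-dimensional Gaussians $\int_{\mathbb{R}^n}e^{-p'|w|^2/4}\,dw=(4\pi/p')^{n/2}$. After collecting factors and using $(p'-1)/p'=1/p$, this gives
$$
\nl g(x-\cdot,t)\nr_{p'}=\frac{1}{(2\sqrt{\pi})^{n/p}\,(p')^{n/(2p')}\,\bigl(\det{\mathcal I}_A^{1/2}(t)\bigr)^{1/p}}.
$$
Maximizing over $|\bs z|=1$ replaces $|e^{{\mathcal I}_{C^*}(t)}\bs z|$ by the spectral norm $\bv e^{{\mathcal I}_{C^*}(t)}\bv$, producing~(\ref{Eq_3.007A}). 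The special case $p=\infty$ corresponds to $p'=1$ and $1/p=0$, so the Gaussian prefactors collapse to unity and (\ref{Eq_3.007B}) follows; equivalently, $\int_{\mathbb{R}^n} g(x-y,t)\,dy=1$.

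The only technical point requiring care is the hypothesis of Proposition~\ref{P_1.2}: each entry of $G^*(x-\cdot,t)$ must lie in $L^{p'}(\mathbb{R}^n)$, which is immediate from the Gaussian decay of $g$ for every $(x,t)\in\mathbb{R}^{n+1}_T$. In the endpoint $p=1$ (so $p'=\infty$) the sharpness of Proposition~\ref{P_1.2} is realized by approximating a Dirac mass concentrated near the maximum of $g$, while at $p=\infty$ sharpness is attained by the constant unit vector aligned with $e^{{\mathcal I}_{C^*}(t)}\bs z$ for an eigenvector $\bs z$ of $e^{{\mathcal I}_C(t)}e^{{\mathcal I}_{C^*}(t)}$ with maximal eigenvalue.
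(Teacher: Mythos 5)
Your proof is correct and follows essentially the same route as the paper: apply Proposition~\ref{P_1.2} to the kernel $G(x-y,t)$ from Lemma~\ref{L_01}, use $(e^{{\mathcal I}_C(t)})^*=e^{{\mathcal I}_{C^*}(t)}$ so the maximum over $|\bs z|=1$ becomes the spectral norm $\bv e^{{\mathcal I}_{C^*}(t)}\bv$, change variables to reduce the $y$-integral to a standard Gaussian, and collect powers via $(p'-1)/p'=1/p$. The only cosmetic difference is that you evaluate $\int_{\mathbb{R}^n}e^{-p'|w|^2/4}\,dw$ directly as a product of one-dimensional Gaussians, whereas the paper passes to spherical coordinates and uses formula~(\ref{Eq_3.012}).
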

\begin{proof} By Proposition \ref{P_1.2} and (\ref{Eq_3.02}),
(\ref{Eq_3.03}), the sharp coefficient in inequality (\ref{Eq_3.007}) is given by
\begin{equation} \label{Eq_3.008}
{\mathcal H}_p(t)=\max_{|\bs z|=1}
\frac{\left |\left (e^{{\mathcal I}_C(t)}\right )^*\bs z\right |}{(2\sqrt{\pi })^n\det {\mathcal I}_A^{1/2}(t)}
\left \{\int_{\mathbb R ^n}e^ {-p'\left |{\mathcal I}_A^{-1/2}(t)( x-y+{\mathcal I}_b(t))\right |^2\big /4} dy\right \}^{1/p'}.
\end{equation}
Since
\begin{equation} \label{Eq_3.002A}
\left ( e^{{\mathcal I}_C(t)} \right )^*= e^{{\mathcal I}_{C^*}(t)},
\end{equation}
it follows from  (\ref{Eq_3.008}) that
\begin{equation} \label{Eq_3.009A}
{\mathcal H}_p(t)=
\frac{\bv e^{{\mathcal I}_{C^*}(t)}\bv }{(2\sqrt{\pi })^n\det {\mathcal I}_A^{1/2}(t)}
\left \{\int_{\mathbb R ^n}e^ {-p'\left |{\mathcal I}_A^{-1/2}(t)( x-y+{\mathcal I}_b(t))\right |^2\big /4} dy\right \}^{1/p'}.
\end{equation}
Further, we introduce the new variable
$\xi={\mathcal I}_A^{-1/2}(t)( x-y+{\mathcal I}_b(t) )$.
Since $y=-{\mathcal I}_A^{1/2}(t)\xi +x+{\mathcal I}_b(t)$, we have $dy =\det {\mathcal I}_A^{1/2}(t)d\xi$, 
which together with (\ref{Eq_3.009A}) leads to the following representation 
\begin{equation} \label{Eq_3.010}
{\mathcal H}_p(t)=
\frac{\bv e^{{\mathcal I}_{C^*}(t)}\bv }{(2\sqrt{\pi })^n
\left ( \det {\mathcal I}_A^{1/2}(t)\right )^{1/p}}
\left \{\int_{\mathbb R ^n}e^ {-p'\left |\xi \right |^2 /4} d\xi\right \}^{1/p'}.
\end{equation}
Passing to the spherical coordinates in (\ref{Eq_3.010}), we obtain
\begin{eqnarray} \label{Eq_3.011}
{\mathcal H}_p(t)&=&
\frac{\bv e^{{\mathcal I}_{C^*}(t)}\bv }{(2\sqrt{\pi })^n
\left ( \det {\mathcal I}_A^{1/2}(t)\right )^{1/p}}
\left \{\int_{{\mathbb S}^{n-1}}d\sigma \int_0^{\infty} \rho^{n-1}e^{-p'\rho^2/4} d\rho\right \}^{1/p'}\nonumber\\
&=&\frac{\bv e^{{\mathcal I}_{C^*}(t)}\bv }{(2\sqrt{\pi })^n
\left ( \det {\mathcal I}_A^{1/2}(t)\right )^{1/p}}
\left \{\omega_n \int_0^{\infty} \rho^{n-1}e^{-p'\rho^2/4} d\rho\right \}^{1/p'},
\end{eqnarray}
where $\omega_n=2\pi^{n/2}/\Gamma (n/2)$ is the area of the unit sphere ${\mathbb S}^{n-1}$ in ${\mathbb R}^{n}$.
Further, making the change of variable $\rho=\sqrt{u}$ in the integral 
$$
\int_0^{\infty}\rho^{n-1} e^{-p'\rho^2/4 } d\rho
$$
and using the formula (e.g. \cite{GRJ}, 3.381, item 4) 
\begin{equation} \label{Eq_3.012}
\int_0^\infty x^{\alpha-1}e^{-\beta x}dx=\beta^{-\alpha}\Gamma(\alpha)
\end{equation}
with positive $\alpha$ and $\beta$, we obtain
$$ 
\int_0^{\infty} \rho^{n-1} e^{-p'\rho^2/4 } d\rho=\frac{1}{2}\int_0^{\infty}u^{\frac{n}{2}-1}e^{-p'u/4}du=
\frac{1}{2}\left ( \frac{4}{p'} \right )^{\frac{n}{2}}{\Gamma \left ( \frac{n}{2} \right )}\;,
$$ 
which leads to
\begin{equation} \label{Eq_3.013}
\omega_n\int_0^{\infty} \rho^{n-1} e^{-p'\rho^2/4 } d\rho=
\frac{2\pi^{n/2}}{\Gamma \left ( \frac{n}{2} \right )}\;
\frac{1}{2}\left ( \frac{4}{p'} \right )^{\frac{n}{2}}{\Gamma \left ( \frac{n}{2} \right )}=\frac{2^n \pi^{n/2}}{p'^{\;n/2}}\;.
\end{equation}
Substituting (\ref{Eq_3.013}) into (\ref{Eq_3.011}), we arrive at (\ref{Eq_3.007A}). 
As a particular case of (\ref{Eq_3.007A}) with $p=\infty$, we obtain (\ref{Eq_3.007B}).
\end{proof}

In the next assertion we prove the sharp pointwise estimate for $|\partial \bs u/\partial\bs \ell |$ with  $ \bs \varphi \in [L^p({\mathbb R}^n)]^m$, $p \in [1, \infty]$.
\begin{theorem} \label{T_02} Let $(x, t)$ be an arbitrary point in ${\mathbb R}^{n+1}_T$ and $\bs u $ be solution of problem $(\ref{H})$. The sharp coefficient ${\mathcal K}_{p, \bs\ell}(t)$ in inequality $(\ref{Eq_1.3A})$ is given by $(\ref{Eq_1.3P})$.

As a consequence, the sharp coefficient ${\mathcal K}_p(t)$ in inequality $(\ref{Eq_1.3ABC})$ is given by
\begin{equation} \label{Eq_3.7}
{\mathcal K}_p(t)=\frac{\bv \;{\mathcal I}_A^{-1/2}(t) \bv  \bv \;e^{{\mathcal I}_{C^*}(t)}\bv }
{\big \{ 2^{n}\pi^{(n+p-1)/2}\det{\mathcal I}_A^{1/2}(t) \big \}^{1/p}}
\left \{ \frac{\Gamma \left (\frac{p'+1}{2} \right )} {p'^{(n+p')/2}} \right \}^{1/p'}.
\end{equation}

As a special case of $(\ref{Eq_3.7})$ with $p=\infty$ one has $(\ref{Eq_1.4})$.
\end{theorem}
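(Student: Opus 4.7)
The plan is to use the integral representation $\bs u(x, t) = \int_{\mathbb R^n} G(x-y, t)\bs\varphi(y)dy$ from Lemma \ref{L_01} (valid for $L^p$-data by Proposition \ref{P_1}) and differentiate under the integral. Because the matrix prefactor $e^{{\mathcal I}_C(t)}$ in (\ref{Eq_3.03}) does not depend on $x$, the $\bs\ell$-derivative acts only on the Gaussian exponent. Using $|{\mathcal I}_A^{-1/2}(t)(x+{\mathcal I}_b(t))|^2 = ({\mathcal I}_A^{-1}(t)(x+{\mathcal I}_b(t)),x+{\mathcal I}_b(t))$, this yields
\[
\frac{\partial G}{\partial \bs\ell}(x, t) = -\frac{1}{2}\bigl({\mathcal I}_A^{-1}(t)(x+{\mathcal I}_b(t)), \bs\ell\bigr)\, G(x,t),
\]
which factors as a real scalar times $e^{{\mathcal I}_C(t)}$.

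I would then apply Proposition \ref{P_1.2} to the linear operator $\bs\varphi\mapsto (\partial\bs u/\partial\bs\ell)(x,t)$ from $[L^p(\mathbb R^n)]^m$ to $\mathbb R^m$. By (\ref{Eq_3.002A}), transposition turns $e^{{\mathcal I}_C(t)}$ into $e^{{\mathcal I}_{C^*}(t)}$, and since the scalar factor is real, the supremum over $|\bs z|=1$ in (\ref{EGLP_2.08}) separates and yields $\bv e^{{\mathcal I}_{C^*}(t)}\bv$ multiplied by the scalar $L^{p'}$-norm of the real factor. I would evaluate this integral by the substitution $\xi={\mathcal I}_A^{-1/2}(t)(x-y+{\mathcal I}_b(t))$ (with Jacobian $\det{\mathcal I}_A^{1/2}(t)$), after which the linear factor becomes $(\xi,\bs\mu)$ with $\bs\mu={\mathcal I}_A^{-1/2}(t)\bs\ell$. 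A rotation aligning $\bs\mu$ with the first coordinate axis splits the $n$-dimensional integral into
\[
|\bs\mu|^{p'}\int_{\mathbb R}|\xi_1|^{p'}e^{-p'\xi_1^2/4}d\xi_1\cdot\int_{\mathbb R^{n-1}}e^{-p'|\xi'|^2/4}d\xi'.
\]
Each factor reduces via $u=p'\xi^2/4$ to a Gamma integral using (\ref{Eq_3.012}); the second one is exactly the $(n-1)$-dimensional analogue of (\ref{Eq_3.013}) computed in the proof of Theorem \ref{T_01}.

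The main obstacle will be bookkeeping: the expression produced by this computation is naturally expressed in $p'$ and must be rearranged into the asymmetric $(p,p')$-form of (\ref{Eq_1.3P}). The critical identity
\[
\frac{n}{2}-\frac{n-1}{2p'}=\frac{n+p-1}{2p},
\]
which follows from $1/p+1/p'=1$, converts the $\pi$-exponent; analogous manipulations with $(p'-1)/p'=1/p$ handle the powers of $2$ and of $\det{\mathcal I}_A^{1/2}(t)$. Once (\ref{Eq_1.3P}) is established, (\ref{Eq_3.7}) is immediate because $|{\mathcal I}_A^{-1/2}(t)\bs\ell|$ is the only $\bs\ell$-dependent factor, so $\max_{|\bs\ell|=1}|{\mathcal I}_A^{-1/2}(t)\bs\ell|=\bv{\mathcal I}_A^{-1/2}(t)\bv$. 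The case $p=\infty$ reduces to (\ref{Eq_1.4}) by setting $p'=1$, using $\Gamma(1)=1$, and noting that $2^{n/p}\pi^{(n+p-1)/(2p)}\to\sqrt\pi$ as $p\to\infty$.
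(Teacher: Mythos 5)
Your proposal is correct and follows essentially the same route as the paper: differentiate the kernel $G$ (only the Gaussian factor depends on $x$), apply Proposition~\ref{P_1.2} to the resulting operator, use (\ref{Eq_3.002A}) to pull out $\bv e^{{\mathcal I}_{C^*}(t)}\bv$ as a separate factor, substitute $\xi={\mathcal I}_A^{-1/2}(t)(x-y+{\mathcal I}_b(t))$, and evaluate the remaining Gaussian integral. The only deviation is cosmetic: to compute $\int_{\mathbb R^n}|(\xi,\bs\mu)|^{p'}e^{-p'|\xi|^2/4}\,d\xi$ you rotate so $\bs\mu={\mathcal I}_A^{-1/2}(t)\bs\ell$ lies along the first axis and split by Fubini into a one-dimensional moment integral times an $(n-1)$-dimensional Gaussian, whereas the paper passes to spherical coordinates and expresses the angular integral (\ref{Eq_3.13}) through a Beta function; both reduce, via $u=\rho^2$ and (\ref{Eq_3.012}), to the same product $\pi^{(n-1)/2}\,\Gamma\!\left(\tfrac{p'+1}{2}\right)\,(4/p')^{(n+p')/2}|\bs\mu|^{p'}$, and your exponent identity $\tfrac{n}{2}-\tfrac{n-1}{2p'}=\tfrac{n+p-1}{2p}$ correctly produces the stated $(p,p')$-form of (\ref{Eq_1.3P}).
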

\begin{proof} Differentiating in (\ref{Eq_3.03}) with respect to $x_j$, $j=1,\dots , n$, we obtain
\begin{equation} \label{Eq_3.8A}
\frac{\partial}{\partial x_j}G(x, t)= -
\frac{e^{{\mathcal I}_C(t)}}{2(2\sqrt{\pi })^n\det {\mathcal I}_A^{1/2}(t)}
\left \{{\mathcal I}_A^{-1}(t)( x+{\mathcal I}_b(t))\right \}_j  
e^ {-\left |{\mathcal I}_A^{-1/2}(t)( x+{\mathcal I}_b(t))\right |^2\big /4},
\end{equation}
which together with (\ref{Eq_3.1AAB}) and (\ref{Eq_3.02}), leads to
\begin{eqnarray} \label{Eq_3.9}
& &\frac{\partial \bs u}{ \partial { \bs\ell}}=(\bs\ell, \nabla_x)\bs u=\sum_{j=1}^n\int_{{\mathbb R}^n}\ell_j\frac{\partial}{\partial x_j}G(x-y, t)\bs\varphi(y)dy=-
\frac{1}{2(2\sqrt{\pi })^n\det {\mathcal I}_A^{1/2}(t)}\nonumber\\
& &\times\int_{{\mathbb R}^n}\!\!\left ({\mathcal I}_A^{-1}(t)( x-y+{\mathcal I}_b(t)), \bs\ell \right )
e^ {-\left |{\mathcal I}_A^{-1/2}(t)( x-y+{\mathcal I}_b(t))\right |^2\big /4}e^{{\mathcal I}_C(t)}\bs\varphi(y)dy .
\end{eqnarray}
Applying Proposition \ref{P_1.2} to (\ref{Eq_3.9}), we conclude that the sharp coefficient in  estimate (\ref{Eq_1.3A})
is given by
\begin{eqnarray*} 
{\mathcal K}_{p, \bs\ell}(t)&=&\max_{|\bs z|=1}\frac{\left |\left (e^{{\mathcal I}_C(t)}\right )^*\bs z\right |}{2(2\sqrt{\pi })^n\det {\mathcal I}_A^{1/2}(t)}\\
&\times&
\left \{\int_{{\mathbb R}^n}\left |\left ({\mathcal I}_A^{-1}(t)( x-y+{\mathcal I}_b(t)) , \bs\ell \right )\right |^{p'}
e^ {-p'\left |{\mathcal I}_A^{-1/2}(t)( x-y+{\mathcal I}_b(t))\right |^2\big /4}dy \right \}^{1/p'},
\end{eqnarray*}
which, in view of (\ref{Eq_3.002A}), implies
\begin{eqnarray*} 
{\mathcal K}_{p, \bs\ell}(t)&=&\frac{\bv e^{{\mathcal I}_{C^*}(t)}\bv }{2(2\sqrt{\pi })^n\det {\mathcal I}_A^{1/2}(t)}\\
&\times&\left \{\int_{{\mathbb R}^n}\left |\left ({\mathcal I}_A^{-1}(t)( x-y+{\mathcal I}_b(t)) , \bs\ell \right )\right |^{p'}
e^ {-p'\left |{\mathcal I}_A^{-1/2}(t)( x-y+{\mathcal I}_b(t))\right |^2\big /4}dy \right \}^{1/p'}.
\end{eqnarray*}
Changing the variable $\xi={\mathcal I}_A^{-1/2}(t)( x-y+{\mathcal I}_b(t) )$ in the last integral in view of
$dy =\left (\det {\mathcal I}_A^{1/2}(t)\right )d\xi$,
we arrive at the following representation
$$
{\mathcal K}_{p, \bs\ell}(t)=\frac{\bv e^{{\mathcal I}_{C^*}(t)}\bv \left (\det {\mathcal I}_A^{1/2}(t)\right )^{1/p'}}{2(2\sqrt{\pi })^n\det {\mathcal I}_A^{1/2}(t)}
\left \{\int_{{\mathbb R}^n}\left |\left ({\mathcal I}_A^{-1/2}(t)\xi , \bs\ell \right )\right |^{p'}
e^{-p' | \xi|^2/4 }d\xi \right \}^{1/p'}.
$$
By the  symmetricity of ${\mathcal I}_A^{-1/2}(t)$, we have
\begin{equation} \label{Eq_3.11ABCD}
{\mathcal K}_{p, \bs\ell}(t)=\frac{\bv e^{{\mathcal I}_{C^*}(t)}\bv }{2(2\sqrt{\pi })^n\left (\det {\mathcal I}_A^{1/2}(t)\right )^{1/p}}
\left \{\int_{{\mathbb R}^n}\left |\left (\xi ,{\mathcal I}_A^{-1/2}(t)\bs\ell \right )\right |^{p'}
e^{-p' | \xi|^2/4 }d\xi \right \}^{1/p'}.
\end{equation}

Passing to the spherical coordinates in (\ref{Eq_3.11ABCD}), we obtain
\begin{equation} \label{Eq_3.12A}
{\mathcal K}_{p, \bs\ell}(t)\!=\!\frac{\bv e^{{\mathcal I}_{C^*}(t)}\bv}{2(2\sqrt{\pi })^n\!\!\left (\det {\mathcal I}_A^{1/2}(t)\right )^{1/p}}\!  
\left \{\int_0^{\infty}\!\!\!\!\rho^{p'+n-1}e^{-p'\rho^2 /4} d\rho\!\!\int_{{\mathbb S}^{n-1}}\! \left |
\big (\bs e_\sigma , {\mathcal I}_A^{-1/2}(t)\bs\ell \big ) \right |^{p'} \!\!\!d\sigma \!\right \}^{\!\!1/p'}\!,
\end{equation}
where $\bs e_\sigma$ is the $n$-dimensional unit vector joining the origin to a point $\sigma$ of the sphere ${\mathbb S}^{n-1}$.

Let $\vartheta$ be the angle between $\bs e_{\sigma}$ and ${\mathcal I}_A^{-1/2}(t)\bs\ell $. We have
\begin{eqnarray} \label{Eq_3.13}
& &\int_{{\mathbb S}^{n-1}} \big |\big (\bs e_{\sigma},{\mathcal I}_A^{-1/2}(t)\bs\ell \big ) \big |^{p'} d\sigma=2\omega_{n-1}
\big |{\mathcal I}_A^{-1/2}(t)\bs\ell \big |^{p'}\int_0^{\pi/2} 
\cos^{p'} \vartheta  \sin^{n-2}\vartheta d\vartheta\nonumber\\
& &\nonumber\\
& &=\omega_{n-1}\big |{\mathcal I}_A^{-1/2}(t)\bs\ell \big |^{p'} B\left ( \frac{p'+1}{2}, \frac{n-1}{2} \right )
=\big |{\mathcal I}_A^{-1/2}(t)\bs\ell \big |^{p'}\:\frac{2\pi^{(n-1)/2}\Gamma \left ( \frac{p'+1}{2} \right )}
{\Gamma \left ( \frac{n+p'}{2} \right )}\;.
\end{eqnarray}
Further, making the change of variable $\rho=\sqrt{u}$ in the integral 
$$
\int_0^{\infty}\rho^{p'+n-1} e^{-p'\rho^2/4} d\rho
$$
and applying (\ref{Eq_3.012}), we obtain
\begin{equation} \label{Eq_3.14}
\int_0^{\infty} \rho^{p'+n-1} e^{-p'\rho^2/4} d\rho=\frac{1}{2}\int_0^{\infty}u^{\frac{p'+n}{2}-1}
e^{-p'u/4 }du=
\frac{1}{2}\left ( \frac{4}{p'} \right )^{\frac{p'+n}{2}}{\Gamma \left ( \frac{n+p'}{2} \right )}\;.
\end{equation}
Combining (\ref{Eq_3.13}) and (\ref{Eq_3.14}) with (\ref{Eq_3.12A}), we arrive at (\ref{Eq_1.3P}). 

Formula (\ref{Eq_3.7}) follows from (\ref{Eq_1.3P}) and (\ref{Eq_1.3AB}).
As a particular case of (\ref{Eq_3.7}) with $p=\infty$, we obtain (\ref{Eq_1.4}).
\end{proof}

%%%%%%%%%%%%%%%%%%%%%%%%%%%%%%%%%%%%%%%%%%%%%%%%%%%%%%%%%%%%%%%%%%
\section{Sharp estimates for solutions to the nonhomogeneous weakly coupled parabolic system} \label{S_5}
%%%%%%%%%%%%%%%%%%%%%%%%%%%%%%%%%%%%%%%%%%%%%%%%%%%%%%%%%%%%%%%%%%

In this section we derive estimates for solution of the Cauchy problem (\ref{NH}). Here we suppose that $\bs f\in [L^p({\mathbb R}^{n+1}_T)]^m$ $\cap [C^\alpha\big (\overline{{\mathbb R}^{n+1}_T} \big )]^m$, $\alpha \in (0, 1)$. 
It follows from the known assertions for the single parabolic equation (e.g. \cite{FR}, Ch.1, Sect. 7 and 9) and  (\ref{Eq_3.02B}), (\ref{NvNH}) that formula (\ref{Eq_3.2a}), where $P$ is defined by (\ref{Eq_4.6B}), solves problem  (\ref{NH}) with $\bs f\in [L^p({\mathbb R}^{n+1}_T)]^m$ $\cap [C^\alpha\big (\overline{{\mathbb R}^{n+1}_T} \big )]^m$.

First, we prove the sharp pointwise estimate for $|\bs u|$, where $\bs u$ is solution of problem (\ref{NH}).

\begin{theorem} \label{T_03} Let $(x, t)$ be an arbitrary point in ${\mathbb R}^{n+1}_T$ and $\bs u $ be solution of problem $(\ref{NH})$. Let us suppose that the integral
$$
\int_0^t\frac{\left | e^{{\mathcal I}_{C^*}(t, \tau)}\bs z\right |^{p'}}{\big (\det {\mathcal I}_A^{1/2}(t, \tau)\big )^{p'-1}}\;d\tau
$$
is convergent. The sharp coefficient ${\mathcal N}_p(t)$ in inequality $(\ref{Eq_3.007N})$ is given by $(\ref{Eq_3.007AN})$.
As a special case of $(\ref{Eq_3.007AN})$ with $p=\infty$ one has $(\ref{Eq_3.007BN})$.
\end{theorem}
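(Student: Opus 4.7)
My plan is to follow the same strategy that produced Theorem 1, only now the operator acts on functions defined on the spacetime slab $\mathbb{R}^n\times(0,t)$ rather than on $\mathbb{R}^n$. By Lemma \ref{L_2}, the solution of problem (\ref{NH}) is given by
\[
\bs u(x,t)=\int_0^t\!\int_{\mathbb{R}^n}P(x-y,t,\tau)\bs f(y,\tau)\,dy\,d\tau,
\]
with $P$ the matrix-valued kernel (\ref{Eq_4.6B}). Fix $(x,t)\in\mathbb{R}^{n+1}_T$. View the linear map $\bs f\mapsto \bs u(x,t)$ as an operator $S\colon[L^p(\mathbb{R}^n\times(0,t))]^m\to\mathbb{R}^m$ of the form (\ref{EGLP_2.07}) with $\mathcal X=\mathbb{R}^n\times(0,t)$, $\mu$ Lebesgue measure, and matrix kernel
\[
G(y,\tau)=\frac{e^{\mathcal{I}_C(t,\tau)}}{(2\sqrt{\pi})^n\det\mathcal{I}_A^{1/2}(t,\tau)}\,\exp\!\Bigl(-\tfrac{1}{4}\bigl|\mathcal{I}_A^{-1/2}(t,\tau)(x-y+\mathcal{I}_b(t,\tau))\bigr|^2\Bigr).
\]
Proposition \ref{P_1.2} then yields $\mathcal{N}_p(t)=\sup_{|\bs z|=1}\nl G^*\bs z\nr_{p'}$, and the convergence assumption in the statement is precisely what guarantees that $G^*\bs z\in L^{p'}$ so that Proposition \ref{P_1.2} applies.

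Next I compute this $L^{p'}$-norm. Using $(e^{\mathcal{I}_C(t,\tau)})^*=e^{\mathcal{I}_{C^*}(t,\tau)}$, a scalar factors out of $G^*\bs z$, leaving
\[
\nl G^*\bs z\nr_{p'}^{p'}=\int_0^t\frac{|e^{\mathcal{I}_{C^*}(t,\tau)}\bs z|^{p'}}{(2\sqrt{\pi})^{np'}\,(\det\mathcal{I}_A^{1/2}(t,\tau))^{p'}}\!\int_{\mathbb{R}^n}\!\exp\!\Bigl(-\tfrac{p'}{4}|\mathcal{I}_A^{-1/2}(t,\tau)(x-y+\mathcal{I}_b(t,\tau))|^2\Bigr)dy\,d\tau .
\]
Performing the change of variables $\xi=\mathcal{I}_A^{-1/2}(t,\tau)(x-y+\mathcal{I}_b(t,\tau))$, as in the proof of Theorem \ref{T_01}, eliminates the $y$-dependence and absorbs one factor of $\det\mathcal{I}_A^{1/2}(t,\tau)$ from the Jacobian, lowering the determinant power from $p'$ to $p'-1$. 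The resulting pure Gaussian integral equals $(4\pi/p')^{n/2}$.

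After these substitutions the inner computation is purely algebraic: combining $(4\pi/p')^{n/2}$ with the prefactor $(2\sqrt{\pi})^{-np'}$ and using $(p'-1)/p'=1/p$ turns the constant into $[(2\sqrt{\pi})^{n/p}\,p'^{\,n/(2p')}]^{-1}$, which is exactly the factor that appears in front of the maximum in (\ref{Eq_3.007AN}). Taking $\sup_{|\bs z|=1}$ of the $1/p'$-power then delivers the formula for $\mathcal{N}_p(t)$. Finally, the case $p=\infty$ corresponds to $p'=1$: the determinant exponent $p'-1$ vanishes, so $\det\mathcal{I}_A^{1/2}(t,\tau)$ drops out entirely, as does the prefactor $(2\sqrt{\pi})^{-n/p}\,p'^{-n/(2p')}$ (both equal $1$), yielding (\ref{Eq_3.007BN}). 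The entire argument is a direct transcription of the Fubini/substitution calculation from Theorem \ref{T_01} to the spacetime setting; the only subtlety is bookkeeping to confirm that the constants collapse to the advertised expression, and ensuring that the $L^{p'}$ convergence required by Proposition \ref{P_1.2} is precisely the hypothesis imposed on the $\tau$-integral.
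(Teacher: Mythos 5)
Your proposal is correct and follows essentially the same route as the paper: apply Proposition \ref{P_1.2} to the spacetime operator $S\colon [L^p(\mathbb{R}^n\times(0,t))]^m\to\mathbb{R}^m$ given by the kernel $P(x-y,t,\tau)$, pull out the scalar factor $|e^{\mathcal{I}_{C^*}(t,\tau)}\bs z|$, change variables $\xi=\mathcal{I}_A^{-1/2}(t,\tau)(x-y+\mathcal{I}_b(t,\tau))$, and evaluate the resulting Gaussian integral. The only cosmetic difference is that the paper computes $\int_{\mathbb{R}^n}e^{-p'|\xi|^2/4}d\xi$ via spherical coordinates and (\ref{Eq_3.012}), whereas you quote it directly as $(4\pi/p')^{n/2}$; the algebra and the final constants agree.
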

\begin{proof} By Proposition \ref{P_1.2} and (\ref{Eq_3.2a}),
(\ref{Eq_4.6B}), the sharp coefficient in inequality (\ref{Eq_3.007N}) is given by
$$ 
{\mathcal N}_p(t)=\frac{1}{(2\sqrt{\pi })^n}\max_{|\bs z|=1}
\left \{\!\int_0^t\int_{\mathbb R ^n}\!\frac{
\left |\left(e^{{\mathcal I}_C(t, \tau)}\right )^*\bs z\right |^{p'}}{\big (\det {\mathcal I}_A^{1/2}(t, \tau)\big )^{p'}}e^ {-p'\left |{\mathcal I}_A^{-1/2}(t,  \tau)( x-y+{\mathcal I}_b(t, \tau))\right |^2\big /4} dy d \tau \!\right \}^{1/p'}\!\!\!,
$$ 
which, in view of 
\begin{equation} \label{Eq_3.002ABN}
\left ( e^{{\mathcal I}_C(t, \tau)} \right )^*= e^{{\mathcal I}_{C^*}(t, \tau)},
\end{equation}
implies
\begin{equation} \label{Eq_3.002AN}
{\mathcal N}_p(t)=\frac{1}{(2\sqrt{\pi })^n}\max_{|\bs z|=1}
\left \{\!\int_0^t\int_{\mathbb R ^n}\!\frac{
\left | e^{{\mathcal I}_{C^*}(t, \tau)}\bs z\right |^{p'}}{\big (\det {\mathcal I}_A^{1/2}(t, \tau)\big )^{p'}}e^ {-p'\left |{\mathcal I}_A^{-1/2}(t,  \tau)( x-y+{\mathcal I}_b(t, \tau))\right |^2\big /4} dy d \tau \!\right \}^{1/p'}\!\!\!.
\end{equation}
Now, we introduce the new variable
$\xi={\mathcal I}_A^{-1/2}(t, \tau)( x-y+{\mathcal I}_b(t, \tau) )$.
Since $y=-{\mathcal I}_A^{1/2}(t, \tau)\xi +x+{\mathcal I}_b(t,  \tau)$, we have $dy =\det {\mathcal I}_A^{1/2}(t, \tau)d\xi$, 
which together with (\ref{Eq_3.002AN}) leads to the following representation 
\begin{equation} \label{Eq_3.010N}
{\mathcal N}_p(t)=\frac{1}{(2\sqrt{\pi })^n}\max_{|\bs z|=1}
\left \{\!\int_0^t\int_{\mathbb R ^n}\!\frac{
\left | e^{{\mathcal I}_{C^*}(t, \tau)}\bs z\right |^{p'}}{\big (\det {\mathcal I}_A^{1/2}(t, \tau)\big )^{p'-1}}e^ {-p'|\xi |^2/4} d\xi d \tau \!\right \}^{1/p'}\!\!\!.
\end{equation}
Passing to the spherical coordinates in (\ref{Eq_3.010N}), we obtain
\begin{eqnarray} \label{Eq_3.011N}
{\mathcal N}_p(t)&=&\frac{1}{(2\sqrt{\pi })^n}\max_{|\bs z|=1}
\left \{\!\int_0^t \frac{
\left | e^{{\mathcal I}_{C^*}(t, \tau)}\bs z\right |^{p'}}{\big (\det {\mathcal I}_A^{1/2}(t, \tau)\big )^{p'-1}}d \tau\int_{{\mathbb S}^{n-1}}d\sigma \int_0^{\infty} \rho^{n-1}e^{-p'\rho^2/4} d\rho \!\right \}^{1/p'}\!\!\!\nonumber\\
&=&\frac{1}{(2\sqrt{\pi })^n}\max_{|\bs z|=1}
\left \{\!\omega_n\int_0^t \frac{
\left | e^{{\mathcal I}_{C^*}(t, \tau)}\bs z\right |^{p'}}{\big (\det {\mathcal I}_A^{1/2}(t, \tau)\big )^{p'-1}}d \tau \int_0^{\infty} \rho^{n-1}e^{-p'\rho^2/4} d\rho \!\right \}^{1/p'}.
\end{eqnarray}
Substituting (\ref{Eq_3.013}) into (\ref{Eq_3.011N}), we arrive at (\ref{Eq_3.007AN}). 
As a particular case of (\ref{Eq_3.007AN}) with $p=\infty$, we obtain (\ref{Eq_3.007BN}).
\end{proof}

In the next assertion we prove the sharp pointwise estimate for $|\partial \bs u/\partial\bs \ell |$, where $\bs u$ is solution of problem (\ref{NH}).

\begin{theorem} \label{T_04} Let $(x, t)$ be an arbitrary point in ${\mathbb R}^{n+1}_T$ and let $\bs u $ solve  problem $(\ref{NH})$. 
Suppose that the integral
$$
 \int_0^t\frac{\left | {\mathcal I}_A^{-1/2}(t,\tau)\bs\ell\right |^{p'}\left | e^{{\mathcal I}_{C^*}(t, \tau)}\bs z\right |^{p'}} {\left (\det {\mathcal I}_A^{1/2}(t,\tau) \right )^{p'-1}} d\tau
$$ 
is convergent for every unit $n$-dimensional vector $\bs\ell$.
Then the sharp coefficient ${\mathcal C}_{p,\ell}(t)$ in inequality
$(\ref{Eq_1.5A})$ is given by $(\ref{Eq_1.5N})$.

As a consequence of $(\ref{Eq_1.5N})$, the sharp coefficient  
${\mathcal C}_{p}(t)$ in inequality $(\ref{Eq_1.5ABC})$ is given by
\begin{equation} \label{Eq_3.17A}
{\mathcal C}_p(t)=\!\frac{1}{\big \{ 2^{n}\pi^{(n\!+\!p\!-\!1)/2} \big \}^{1/p}}\left \{\!\! \frac{\Gamma \left (\frac{p'+1}{2} \right )} {p'^{\;(n+p')/2}} \!\!\right \}^{\!\!1/p'}\!\!\!\!\!\!\!\max_{|\bs\ell|=1}\max_{|\bs z|=1}
 \left \{\!\int_0^t\!\frac{\left | {\mathcal I}_A^{-1/2}(t,\tau)\bs\ell\right |^{p'}\!\!\left | e^{{\mathcal I}_{C^*}(t, \tau)}\bs z\right |^{p'}} {\left (\det {\mathcal I}_A^{1/2}(t,\tau) \right )^{p'-1}} d\tau
 \right \}^{\!1/p'}\!\!.
\end{equation}

As a special case of $(\ref{Eq_3.17A})$ with $p=\infty$ one has $(\ref{Eq_1.6})$.
\end{theorem}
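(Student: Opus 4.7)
My plan is to follow the same strategy as in the proof of Theorem 2, but now with the integral taken over the product measure space $\mathbb R^n\times(0,t)$ rather than over $\mathbb R^n$ alone, and with $P(x-y,t,\tau)$ in place of $G(x-y,t)$.

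First I would differentiate the kernel $P$ in $(\ref{Eq_4.6B})$ with respect to $x_j$, obtaining the analogue of $(\ref{Eq_3.8A})$,
\[
\frac{\partial P}{\partial x_j}(x,t,\tau)=-\frac{e^{\mathcal I_C(t,\tau)}}{2(2\sqrt\pi)^n\det\mathcal I_A^{1/2}(t,\tau)}\,\bigl\{\mathcal I_A^{-1}(t,\tau)(x+\mathcal I_b(t,\tau))\bigr\}_j\,e^{-|\mathcal I_A^{-1/2}(t,\tau)(x+\mathcal I_b(t,\tau))|^2/4}.
\]
Combining this with $(\ref{Eq_3.1AAB})$ and differentiating $(\ref{Eq_3.2a})$ under the integral sign, I obtain an expression for $\partial\bs u/\partial\bs\ell$ as an integral over $\mathbb R^n\times(0,t)$ of an $(m\times m)$-matrix-valued kernel applied to $\bs f(y,\tau)$.

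Next I would view this representation as an integral operator $S\colon [L^p(\mathbb R^n\times(0,t))]^m\to\mathbb R^m$ of the form $(\ref{EGLP_2.07})$ with measure $dy\,d\tau$, and apply Proposition \ref{P_1.2}. This identifies the sharp coefficient in $(\ref{Eq_1.5A})$ as
\[
\mathcal C_{p,\bs\ell}(t)=\max_{|\bs z|=1}\frac{1}{2(2\sqrt\pi)^n}\left\{\int_0^t\int_{\mathbb R^n}\frac{\bigl|e^{\mathcal I_{C^*}(t,\tau)}\bs z\bigr|^{p'}\bigl|(\mathcal I_A^{-1}(t,\tau)(x-y+\mathcal I_b(t,\tau)),\bs\ell)\bigr|^{p'}}{(\det\mathcal I_A^{1/2}(t,\tau))^{p'}}\,e^{-p'|\mathcal I_A^{-1/2}(t,\tau)(x-y+\mathcal I_b(t,\tau))|^2/4}\,dy\,d\tau\right\}^{1/p'},
\]
where I have already used $(\ref{Eq_3.002ABN})$ to pass from $e^{\mathcal I_C(t,\tau)}$ to $e^{\mathcal I_{C^*}(t,\tau)}$ outside.

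Then for each fixed $\tau$ I would introduce the change of variable $\xi=\mathcal I_A^{-1/2}(t,\tau)(x-y+\mathcal I_b(t,\tau))$ in the inner $y$-integral; the Jacobian contributes a factor $\det\mathcal I_A^{1/2}(t,\tau)$, and by symmetry of $\mathcal I_A^{-1/2}(t,\tau)$ the linear form in the integrand becomes $(\xi,\mathcal I_A^{-1/2}(t,\tau)\bs\ell)$, exactly as in the passage from $(\ref{Eq_3.9})$ to $(\ref{Eq_3.11ABCD})$. This decouples the spatial and temporal integrals: the $\tau$-dependence collects into the factor $|e^{\mathcal I_{C^*}(t,\tau)}\bs z|^{p'}|\mathcal I_A^{-1/2}(t,\tau)\bs\ell|^{p'}/(\det\mathcal I_A^{1/2}(t,\tau))^{p'-1}$, while the $\xi$-integral is the same Gaussian moment computed in $(\ref{Eq_3.12A})$--$(\ref{Eq_3.14})$. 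After passing to spherical coordinates and applying the Beta-function identity $(\ref{Eq_3.13})$ together with the Gamma identity $(\ref{Eq_3.14})$, the angular and radial factors combine to yield exactly the constant $\{2^n\pi^{(n-1)/2}\Gamma((p'+1)/2)\cdot\tfrac12(4/p')^{(p'+n)/2}\}^{1/p'}$ appearing in $(\ref{Eq_1.5N})$. Assembling the constants gives $(\ref{Eq_1.5N})$.

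Formula $(\ref{Eq_3.17A})$ then follows immediately from $(\ref{Eq_1.5AB})$ by taking the maximum over $|\bs\ell|=1$, and the specialization $p=\infty$ (so $p'=1$, and the Gamma/constants collapse to $1/\sqrt\pi$) yields $(\ref{Eq_1.6})$. The main obstacle I anticipate is not a conceptual one, but the bookkeeping of constants through the change of variables and the spherical integration; in particular, one must keep careful track of which powers of $\det\mathcal I_A^{1/2}(t,\tau)$ survive after the Jacobian cancels one factor, and verify that the $\max_{|\bs z|=1}$ can be pulled outside the $\tau$-integral --- which it can, because Proposition \ref{P_1.2} applied on the product measure space delivers the supremum over $\bs z$ outside the full spatio-temporal integral.
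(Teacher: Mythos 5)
Your proposal is correct and follows essentially the same route as the paper's proof: differentiate the kernel $P$ from (\ref{Eq_4.6B}), apply Proposition \ref{P_1.2} with ${\mathcal X}=\mathbb R^n\times(0,t)$ and $d\mu=dy\,d\tau$, pass $e^{{\mathcal I}_C(t,\tau)}$ to $e^{{\mathcal I}_{C^*}(t,\tau)}$ via (\ref{Eq_3.002ABN}), change variable $\xi={\mathcal I}_A^{-1/2}(t,\tau)(x-y+{\mathcal I}_b(t,\tau))$ in the inner integral, use symmetry of ${\mathcal I}_A^{-1/2}(t,\tau)$, and then evaluate the decoupled Gaussian moment via spherical coordinates and the Beta/Gamma identities already established in (\ref{Eq_3.13})--(\ref{Eq_3.14}). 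Your closing remark correctly identifies the one subtle point --- that the $\max_{|\bs z|=1}$ sits outside the entire spatio-temporal integral precisely because Proposition \ref{P_1.2} is applied on the product measure space --- and the bookkeeping of the surviving power $(\det{\mathcal I}_A^{1/2}(t,\tau))^{p'-1}$ after the Jacobian cancellation is exactly as in the paper.
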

\begin{proof} Differentiating in (\ref{Eq_4.6B}) with respect to $x_j$, $j=1,\dots , n$, we obtain
\begin{eqnarray} \label{Eq_5.1}
& &\frac{\partial}{\partial x_j}P(x, t, \tau)\nonumber\\
& &= -
\frac{e^{{\mathcal I}_C(t, \tau)}}{2(2\sqrt{\pi })^n\det {\mathcal I}_A^{1/2}(t, \tau)}
\left \{{\mathcal I}_A^{-1}(t, \tau)( x+{\mathcal I}_b(t, \tau))\right \}_j  
e^ {-\left |{\mathcal I}_A^{-1/2}(t, \tau)( x+{\mathcal I}_b(t, \tau))\right |^2\big /4},
\end{eqnarray}
which together with (\ref{Eq_3.1AAB}) and (\ref{Eq_3.2a}), leads to
\begin{eqnarray*} 
\hspace{-5mm}& &\frac{\partial \bs u}{ \partial { \bs\ell}}=(\bs\ell, \nabla_x)\bs u=\sum_{j=1}^n\int_0^t\int_{{\mathbb R}^n}\ell_j\frac{\partial}{\partial x_j}P(x-y, t, \tau)\bs f(y, \tau)dy d\tau=-
\frac{1}{2(2\sqrt{\pi })^n}\\
\hspace{-5mm}& &\!\!\!\times\int_0^t\!\int_{{\mathbb R}^n}\frac{e^{{\mathcal I}_C(t,\tau)}}{\det {\mathcal I}_A^{1/2}(t, \tau )}\left ({\mathcal I}_A^{-1}(t, \tau)( x-y+{\mathcal I}_b(t, \tau)), \bs\ell \right )
e^ {-\left |{\mathcal I}_A^{-1/2}(t, \tau)( x-y+{\mathcal I}_b(t, \tau))\right |^2\big /4}\bs f(y, \tau)dy d\tau.
\end{eqnarray*}
Applying Proposition \ref{P_1.2} to the last representation, we conclude that the sharp coefficient in  estimate (\ref{Eq_1.5A})
is given by
\begin{eqnarray*} 
\hspace{-7mm}& &{\mathcal C}_{p, \bs\ell}(t)=\frac{1}{2(2\sqrt{\pi })^n} \\
\hspace{-7mm}& &\!\times\max_{|\bs z|=1}
\!\left \{\!\int_0^t\!\!\int_{{\mathbb R}^n}\!\!\frac{\big |\big (e^{{\mathcal I}_C(t,\tau)}\big )^*\! \bs z\big |^{p'}}{\big (\!\det {\mathcal I}_A^{1/2}(t, \tau )\!\big )^{p'}}\!\left |\!\left ({\mathcal I}_A^{-1}(t, \tau)( x\!-\!y\!+\!{\mathcal I}_b(t, \tau)) , \bs\ell \right )\!\right |^{p'}\!\!
e^ {-p'\left |{\mathcal I}_A^{-1/2}(t, \tau)( x\!-\!y\!+\!{\mathcal I}_b(t, \tau))\right |^2\!\big /4}dy d\tau \!\right \}^{\!\!\frac{1}{p'}}\!\!\!,
\end{eqnarray*}
which in view of (\ref{Eq_3.002ABN}), implies
\begin{eqnarray*}
\hspace{-7mm}& &{\mathcal C}_{p, \bs\ell}(t)=\frac{1}{2(2\sqrt{\pi })^n} \\
\hspace{-7mm}& &\!\times\max_{|\bs z|=1}
\!\left \{\!\int_0^t\!\!\int_{{\mathbb R}^n}\!\!\frac{\big | e^{{\mathcal I}_{C^*}(t,\tau)}\bs z\big |^{p'}}{\big (\!\det {\mathcal I}_A^{1/2}(t, \tau )\!\big )^{p'}}\!\left |\!\left ({\mathcal I}_A^{-1}(t, \tau)( x\!-\!y\!+\!{\mathcal I}_b(t, \tau)) , \bs\ell \right )\!\right |^{p'}\!\!
e^ {-p'\left |{\mathcal I}_A^{-1/2}(t, \tau)( x\!-\!y\!+\!{\mathcal I}_b(t, \tau))\right |^2\!\big /4}dy d\tau \!\right \}^{\!\!\frac{1}{p'}}\!\!\!.
\end{eqnarray*}
Changing the variable $\xi={\mathcal I}_A^{-1/2}(t, \tau)( x-y+{\mathcal I}_b(t, \tau) )$ in the last integral in view of
$dy =\left (\det {\mathcal I}_A^{1/2}(t, \tau)\right )d\xi$,
we arrive at the following representation
$$
{\mathcal C}_{p, \bs\ell}(t)=
\frac{1}{2(2\sqrt{\pi })^n}\max_{|\bs z|=1}
\left \{\int_0^t\int_{{\mathbb R}^n}\frac{\big | e^{{\mathcal I}_{C^*}(t,\tau)}\bs z\big |^{p'}}{\big (\det {\mathcal I}_A^{1/2}(t, \tau )\big )^{p'-1}}\left |\left ({\mathcal I}_A^{-1/2}(t, \tau)\xi, \bs\ell \right )\right |^{p'}
e^ {-p'\left |\xi\right |^2/4}d\xi d\tau \right \}^{1/p'}\!\!\! .
$$
By the  symmetricity of ${\mathcal I}_A^{-1/2}(t, \tau)$, we have
$$ 
{\mathcal C}_{p, \bs\ell}(t)=
\frac{1}{2(2\sqrt{\pi })^n}\max_{|\bs z|=1}
\left \{\int_0^t\int_{{\mathbb R}^n}\frac{\big | e^{{\mathcal I}_{C^*}(t,\tau)}\bs z\big |^{p'}}{\big (\det {\mathcal I}_A^{1/2}(t, \tau )\big )^{p'-1}}\left |\left (\xi, {\mathcal I}_A^{-1/2}(t, \tau)\bs\ell \right )\right |^{p'}
e^ {-p'\left |\xi\right |^2/4}d\xi d\tau \right \}^{1/p'}\!\!\! .
$$ 
Passing to the spherical coordinates in the inner integral, we obtain
\begin{eqnarray} \label{Eq_3.003}
\hspace{-1.5cm}& &{\mathcal C}_{p, \bs\ell}(t)=
\frac{1}{2(2\sqrt{\pi })^n}\nonumber\\
\hspace{-1.5cm}& &\times\max_{|\bs z|=1}
\left \{\int_0^t\frac{\big | e^{{\mathcal I}_{C^*}(t,\tau)}\bs z\big |^{p'}}{\big (\det {\mathcal I}_A^{1/2}(t, \tau )\big )^{p'-1}}d\tau\int_0^{\infty}\!\!\!\!\rho^{p'+n-1}e^{-p'\rho^2 /4} d\rho\!\!\int_{{\mathbb S}^{n-1}}\! \left |
\big (\bs e_\sigma , {\mathcal I}_A^{-1/2}(t, \tau)\bs\ell \big ) \right |^{p'} \!\!\!d\sigma \right \}^{1/p'}\!\!\! ,
\end{eqnarray}
where $\bs e_\sigma$ is as before, the $n$-dimensional unit vector joining the origin to a point $\sigma$ of the sphere ${\mathbb S}^{n-1}$.

Let $\vartheta$ be the angle between $\bs e_{\sigma}$ and ${\mathcal I}_A^{-1/2}(t, \tau )\bs\ell $. Similarly to (\ref{Eq_3.13}), we have
\begin{equation} \label{Eq_3.004}
\int_{{\mathbb S}^{n-1}} \big |\big (\bs e_{\sigma},{\mathcal I}_A^{-1/2}(t, \tau)\bs\ell \big ) \big |^{p'} d\sigma=\big |{\mathcal I}_A^{-1/2}(t, \tau)\bs\ell \big |^{p'}\:\frac{2\pi^{(n-1)/2}\Gamma \left ( \frac{p'+1}{2} \right )}{\Gamma \left ( \frac{n+p'}{2} \right )}\;.
\end{equation}
Combining (\ref{Eq_3.14}) and (\ref{Eq_3.004}) with (\ref{Eq_3.003}), we arrive at (\ref{Eq_1.5N}). 
Equality (\ref{Eq_3.17A}) follows from (\ref{Eq_1.5N}) and (\ref{Eq_1.5AB}). Putting $p=\infty $ in (\ref{Eq_3.17A}), we arrive at (\ref{Eq_1.6}). 
\end{proof}

\medskip
{\bf Acknowledgement.} The publication has been prepared with the support of the "RUDN University Program 5-100".

%%%%%%%%%%%%%%%%%%%%%%%%%%%%%%%%%%%%%%%%%%%%%%%%%%%%%%%%%%%%%%%%%%

%%%%%%%%%%%%%%%%%%%%%%%%%%%%%%%%%%%%%%%%%%%%%%%%%%%%%%%%%%%%%%%%%%%
\end{document}